\documentclass{ametsocV5}

\usepackage{amsmath,amsfonts,amssymb,amsthm,mathtools,bm}
\usepackage{mathptmx}
\usepackage{newtxtext}
\usepackage{newtxmath}
\makeatletter
\AtBeginDocument{%
  \nolinenumbers
  \let\linenumbers\relax
  \let\internallinenumbers\relax

}
\makeatother
\theoremstyle{plain}
\newtheorem{theorem}{Theorem}[section]
\newtheorem{lemma}[theorem]{Lemma}
\newtheorem{proposition}[theorem]{Proposition}
\newtheorem{corollary}[theorem]{Corollary}
\theoremstyle{definition}
\newtheorem{definition}[theorem]{Definition}
\theoremstyle{remark}
\newtheorem{remark}[theorem]{Remark}

\newcommand{\dd}{\,\mathrm{d}}
\newcommand{\RR}{\mathbb R}

\title{Weighted Uniform Endpoint Majorants for Integrals Involving Modified Bessel Functions}

\authors{Yaoran Yang and Yutong Zhang\correspondingauthor{Yutong Zhang, yutongzhang@stu.scu.edu.cn}}
\affiliation{School of Mathematics, Sichuan University, 24 First Loop Road South Section I, Chengdu 610064, Sichuan, China}
\runningheadauthors{Yang and Zhang}

\abstract{We give an affirmative full-range solution to Gaunt's 2019 Open Problem~2.10. The problem asks whether, for every \(\nu>-1/2\) and \(0<\gamma<1\), the reciprocal-power integral \(\int_0^x e^{-\gamma t}I_\nu(t)t^{-\nu}\,\dd t\) is bounded by a constant multiple of \(e^{-\gamma x}I_{\nu+1}(x)x^{-\nu}\), uniformly for all \(x>0\). Earlier exponential-tilt estimates proved such endpoint majorants only under an additional smallness condition on \(\gamma\). We prove the estimate throughout the natural range \(0<\gamma<1\), with an explicit admissible constant. More generally, if \(\mu>-1\), \(q>-1\), \(0<\gamma<1\), and \(w(x)x^{-q}\) is nondecreasing on \((0,\infty)\), then for every \(\theta\in(\gamma,1)\), \(\int_0^x e^{-\gamma t}w(t)t^{-\mu}I_\mu(t)\,\dd t\) is controlled by an explicit multiple of \(e^{-\gamma x}w(x)x^{-\mu}I_{\mu+1}(x)\). The case \(w\equiv1\), \(q=0\), and \(\mu=\nu\) resolves Gaunt's problem. The weighted theorem also yields shifted-order and moment estimates, applies to approximate power weights and monotone regularly varying amplitudes, and provides lower endpoint estimates under a reversed comparison and two-sided estimates for exact power weights. We further analyze the sharp power-weighted quotient via endpoint expansions, a stationary equation, and parameter monotonicity.}

\begin{document}

\maketitle

\noindent\textbf{Keywords:} Modified Bessel functions ; Integral inequalities ; Exponentially weighted integrals ; Endpoint bounds ; Bessel ratios ; Gaunt's open problem

\section{Introduction}

The modified Bessel function of the first kind is
\begin{equation}\label{Iseries}
 I_\alpha(x)=\sum_{k=0}^{\infty}\frac{(x/2)^{2k+\alpha}}{k!\Gamma(k+\alpha+1)},
 \qquad x>0.
\end{equation}
Finite integrals involving \(I_\alpha\) occur in approximation theory, probability and special-function estimates, but they usually do not reduce to simple endpoint expressions.  This has motivated a sequence of inequalities for modified Bessel integrals; see, among others, Gaunt \cite{Gaunt2014,Gaunt2018,Gaunt2021}.  Gaunt \cite{Gaunt2019} considered in particular the reciprocal-power family
\begin{equation}\label{GauntClass}
  \int_0^x \exp\!\left(-\gamma t\right)\frac{I_{\nu+n}(t)}{t^\nu}\,\dd t,
  \qquad x>0,
\end{equation}
and obtained sharp or asymptotically sharp endpoint estimates in several cases.  For the exponentially tilted upper bounds, however, the available result involved a restriction of the form \(0<\gamma<1/C_{\nu,n}\).  This leaves open the full natural tilt range \(0<\gamma<1\), which is exactly the range in which the integral has the same exponential growth rate as its endpoint majorant.

The specific question left by Gaunt is Open Problem~2.10 of \cite{Gaunt2019}: for \(\nu>-1/2\) and \(0<\gamma<1\), find a finite constant \(M_{\nu,\gamma}\) such that
\begin{equation}\label{GauntOpen}
 \int_0^x \exp\!\left(-\gamma t\right)\frac{I_\nu(t)}{t^\nu}\,\dd t
 <M_{\nu,\gamma}\exp\!\left(-\gamma x\right)\frac{I_{\nu+1}(x)}{x^\nu},
 \qquad x>0.
\end{equation}
The endpoint form in \eqref{GauntOpen} is forced.  Indeed,
\begin{equation}\label{GauntEndpointLimitsIntro}
 \frac{\displaystyle\int_0^x \exp\!\left(-\gamma t\right)I_\nu(t)t^{-\nu}\,\dd t}
 {\displaystyle\exp\!\left(-\gamma x\right)I_{\nu+1}(x)x^{-\nu}}
 \to 2(\nu+1)\quad(x\downarrow0),
 \qquad
 \to\frac1{1-\gamma}\quad(x\to\infty).
\end{equation}
Thus the difficulty is not local endpoint order.  The difficulty is global: the factor \(\exp\{\gamma(x-t)\}\), which appears after factoring out \(\exp(-\gamma x)\), changes the distribution of mass over \((0,x)\) and prevents the \(\gamma=0\) endpoint inequality from giving a uniform bound on the whole half-line.

The first result of this paper is that \eqref{GauntOpen} has a positive answer for the full range \(\nu>-1/2\), \(0<\gamma<1\).  More precisely, for every \(\theta\in(\gamma,1)\),
\begin{equation}\label{IntroGauntConstant}
 \int_0^x e^{-\gamma t}\frac{I_\nu(t)}{t^\nu}\,\dd t
 <
 \max\left\{
 2(\nu+1)\exp\!\left(\frac{2\gamma(\nu+2)\theta}{1-\theta}\right),
 \frac{\nu+1+\theta}{(\nu+2)\theta(\theta-\gamma)}
 \right\}
 e^{-\gamma x}\frac{I_{\nu+1}(x)}{x^\nu},
 \qquad x>0.
\end{equation}
The constants in \eqref{IntroGauntConstant} are not claimed to be sharp; their role is to prove finite uniform control explicitly throughout the whole tilt interval.

The main theorem is stronger than \eqref{IntroGauntConstant}.  Fix
\begin{equation}\label{MainRangeIntro}
 \mu>-1,
 \qquad q>-1,
 \qquad 0<\gamma<1,
\end{equation}
and write
\begin{equation}\label{YZintro}
 Y_{\mu,q}(x)=x^{q-\mu}I_\mu(x),
 \qquad
 Z_{\mu,q}(x)=x^{q-\mu}I_{\mu+1}(x).
\end{equation}
We prove a weighted endpoint inequality of the form
\begin{equation}\label{GeneralEndpointProblem}
 \int_0^x \exp\!\left(-\gamma t\right)w(t)t^{-\mu}I_\mu(t)\,\dd t
 \le M\exp\!\left(-\gamma x\right)w(x)x^{-\mu}I_{\mu+1}(x),
 \qquad x>0,
\end{equation}
under the one-sided power comparison
\begin{equation}\label{BasicWeightIntro}
 x^{-q}w(x)\quad\text{is nondecreasing on }(0,\infty).
\end{equation}
This condition is the natural sufficient structural assumption used below to compare lower endpoint values of the weight with the terminal value \(w(x)\).  For \(\theta\in(\gamma,1)\), define
\begin{equation}\label{Kintro}
 K_{\mu,q}=\max\left\{1,\frac{2(\mu+1)}{q+1}\right\}.
\end{equation}
When \(q\ge0\), Theorem~\ref{thm:main-weighted} gives \eqref{GeneralEndpointProblem} with
\begin{equation}\label{ClosedMIntro}
 M_{\mu,q,\gamma}(\theta)=
 \max\left\{
 K_{\mu,q}\exp\!\left(\frac{2\gamma(\mu+2)\theta}{1-\theta}\right),
 \frac{\mu+1+\theta}{(\mu+2)\theta(\theta-\gamma)}
 \right\}.
\end{equation}
For \(-1<q<0\), the same formulaic structure holds with a larger explicit threshold; see \eqref{Xdef} and \eqref{Mgeneral}.  The two terms in \eqref{ClosedMIntro} come from different parts of the interval: a small-endpoint estimate based on a sharp power inequality, and a tail estimate based on a lower bound for the Bessel ratio \(I_{\mu+1}/I_\mu\).

The main contributions can be summarized as follows.
\begin{enumerate}
\item We solve Gaunt's Open Problem~2.10 by proving \eqref{IntroGauntConstant}, hence obtaining a uniform endpoint majorant for \(I_\nu(t)t^{-\nu}\) for every \(\nu>-1/2\) and \(0<\gamma<1\).
\item We show that the open problem is the unweighted case of a stable weighted principle: if \(w(x)x^{-q}\) is nondecreasing, then the weighted integral in \eqref{GeneralEndpointProblem} is controlled by the same terminal value of the weight and the endpoint order \(I_{\mu+1}\).
\item We derive shifted-order and moment estimates.  Setting
\begin{equation}\label{ShiftIntro}
 \mu=\nu+n,
 \qquad q=a+n,
\end{equation}
turns the power-weighted inequality into
\begin{equation}\label{ShiftedIntro}
 \int_0^x \exp\!\left(-\gamma t\right)t^a\frac{I_{\nu+n}(t)}{t^\nu}\,\dd t
 \le M_{\nu+n,a+n,\gamma}(\theta)
 \exp\!\left(-\gamma x\right)x^a\frac{I_{\nu+n+1}(x)}{x^\nu},
 \qquad x>0,
\end{equation}
under the minimal convergence conditions \(\nu+n>-1\) and \(a+n>-1\).  For \(\nu>-1/2\), the choice \(a=n=0\) recovers \eqref{GauntOpen}.
\item We extend the endpoint theory beyond exact powers: approximate power weights, reversed one-sided comparisons, positive power mixtures and monotone regularly varying amplitudes are all treated without changing the endpoint scale.
\item We separate existence of a uniform majorant from sharp evaluation of the best constant.  For power weights we study the sharp quotient by endpoint expansions, a stationary equation, an optimized one-parameter constructive constant and parameter monotonicity.
\end{enumerate}

The relation with existing work is as follows.  The inequalities of Gaunt \cite{Gaunt2014,Gaunt2018,Gaunt2019,Gaunt2021} provide the immediate background, and the ratio estimates used here belong to the classical Bessel-ratio literature of Amos \cite{Amos1974}, Nasell \cite{Nasell1974,Nasell1978}, Soni \cite{Soni1965}, Jones \cite{Jones1968}, Baricz \cite{Baricz2010}, Baricz and Ponnusamy \cite{BariczPonnusamy2010}, Hornik and Grun \cite{HornikGrun2013}, Ruiz-Antolin and Segura \cite{RuizSegura2016}, and Segura \cite{Segura2011,Segura2023}.  A recent paper of Gaunt \cite{Gaunt2025} treats the different positive-power integral \(\int_0^x e^{-\gamma t}t^\nu I_\nu(t)\,\dd t\) and obtains refined constants for that specialized setting.  Some positive-power cases are contained in our weighted formulation, but our main point is different: the full-range resolution of the reciprocal-power endpoint problem \eqref{GauntOpen} and its extension to a general monotone-weight framework.

The paper is organized as follows.  Section \ref{sec:basic} records the ratio comparison and the sharp unweighted power inequality.  Section \ref{sec:admissible} proves the main weighted theorem and its approximate-weight variant.  Section \ref{sec:twosided} gives two-sided estimates.  Section \ref{sec:shifted} treats shifted order and shifted endpoint choices, including the obstruction for endpoints of order higher than \(I_{\mu+1}\).  Section \ref{sec:sharp} studies the sharp quotient and the optimized constructive constant.  Section \ref{sec:parameters} gives parameter comparison theorems.  Section \ref{sec:weights} treats positive power mixtures and monotone regularly varying weights.  Section \ref{sec:gaunt} states explicitly how the estimates recover and strengthen the corresponding Gaunt-type inequalities.

\section{Basic identities and the sharp unweighted power bound}\label{sec:basic}

We use standard recurrence and differentiation identities for modified Bessel functions; see Watson \cite{Watson1944} and the NIST Handbook \cite[Chapter 10]{NIST}. In particular,
\begin{equation}\label{BasicBesselIds}
 I_{\alpha-1}(x)-I_{\alpha+1}(x)=\frac{2\alpha}{x}I_\alpha(x),
 \qquad
 \frac{\dd}{\dd x}\{x^{-\alpha}I_\alpha(x)\}=x^{-\alpha}I_{\alpha+1}(x).
\end{equation}
For \(r_\alpha=I_{\alpha+1}/I_\alpha\) these imply
\begin{equation}\label{Riccati}
 r_\alpha'(x)=1-r_\alpha(x)^2-\frac{2\alpha+1}{x}r_\alpha(x).
\end{equation}
The normalized endpoint scales satisfy
\begin{equation}\label{LogDerivatives}
 \frac{\dd}{\dd x}\log Y_{\mu,q}(x)=r_\mu(x)+\frac qx,
 \qquad
 \frac{\dd}{\dd x}\log Z_{\mu,q}(x)=r_{\mu+1}(x)+\frac{q+1}{x}.
\end{equation}

\begin{lemma}\label{lemma:ratio}
If \(\alpha>-1\), then
\begin{equation}\label{RatioLower}
 r_\alpha(x)>\frac{x}{x+2\alpha+2},
 \qquad x>0.
\end{equation}
\end{lemma}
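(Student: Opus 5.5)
Writing \(g(x)=\frac{x}{x+2\alpha+2}\), the plan is a Riccati comparison argument based on \eqref{Riccati}. First check the origin. From \eqref{Iseries}, \(r_\alpha(x)=\frac{x}{2(\alpha+1)}+O(x^3)\) as \(x\downarrow0\), while \(g(x)=\frac{x}{2\alpha+2}-\frac{x^2}{(2\alpha+2)^2}+O(x^3)\). Hence \(r_\alpha-g\sim x^2/(2\alpha+2)^2>0\) near \(0\), and the inequality holds on some interval \((0,\varepsilon)\). Here \(\alpha>-1\) guarantees \(2\alpha+2>0\), so \(g\) is positive and smooth on \((0,\infty)\).

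Next, compute the Riccati defect of \(g\), namely \(D(x)=g'(x)-\bigl(1-g^2-\frac{2\alpha+1}{x}g\bigr)\). Set \(c=2\alpha+2\). Then \(g'=c/(x+c)^2\), \(1-g^2=c(2x+c)/(x+c)^2\), and \(\frac{2\alpha+1}{x}g=(c-1)/(x+c)\). Combining these,
\begin{equation*}
 D(x)=\frac{c-c(2x+c)+(c-1)(x+c)}{(x+c)^2}=\frac{-x(c+1)}{(x+c)^2}<0 .
\end{equation*}
So \(g\) is a strict subsolution: \(g'<1-g^2-\frac{2\alpha+1}{x}g\).

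Now conclude by the first-crossing argument. Suppose \(x_0>0\) is the first point where \(r_\alpha(x_0)=g(x_0)\); such a point exists after \(\varepsilon\) if the inequality ever fails. At \(x_0\) we have \(r_\alpha'(x_0)=1-g^2-\frac{2\alpha+1}{x_0}g>g'(x_0)\), so \((r_\alpha-g)'(x_0)>0\). On the other hand, \(r_\alpha-g>0\) on \((0,x_0)\) and vanishes at \(x_0\), which forces \((r_\alpha-g)'(x_0)\le0\). This contradiction proves \eqref{RatioLower}.

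The only delicate point is the small-\(x\) expansion, which needs the second coefficient of \(r_\alpha\). The series gives \(r_\alpha(x)=\frac{x}{2(\alpha+1)}\bigl(1-\frac{x^2}{4(\alpha+1)(\alpha+2)}+\dots\bigr)\), so it has no \(x^2\) term. Only the \(x^2\) term of \(g\) then matters, and its sign is what makes \(r_\alpha-g>0\) near \(0\). Positivity and smoothness of \(I_\alpha\) on \((0,\infty)\) for \(\alpha>-1\) follow directly from \eqref{Iseries}, so \(r_\alpha\) is well defined and \(C^1\) there.
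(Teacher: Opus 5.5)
Your proof is correct and follows the paper's argument essentially step for step. You use the same comparison function \(x/(x+2\alpha+2)\), the same Riccati computation showing it is a strict subsolution (your defect \(-(c+1)x/(x+c)^2\) is equivalent to the paper's inequality \(\frac{(a+1)x+a}{(x+a)^2}>\frac{a}{(x+a)^2}\)), the same small-\(x\) expansion near the origin, and the same first-crossing contradiction.
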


\begin{proof}
Let \(a=2\alpha+2>0\) and \(s(x)=x/(x+a)\). By \eqref{Riccati},
\[
 1-s(x)^2-\frac{a-1}{x}s(x)=\frac{(a+1)x+a}{(x+a)^2}>\frac{a}{(x+a)^2}=s'(x).
\]
At the origin,
\[
 r_\alpha(x)=\frac{x}{2\alpha+2}+O(x^3),
 \qquad
 s(x)=\frac{x}{2\alpha+2}-\frac{x^2}{(2\alpha+2)^2}+O(x^3),
\]
so \(r_\alpha>s\) on some interval \((0,\varepsilon)\). Suppose, for contradiction, that
\(r_\alpha-s\) vanishes somewhere on \((0,\infty)\), and let \(x_0\) be its first zero. Then
\(r_\alpha(x_0)=s(x_0)\) and \((r_\alpha-s)'(x_0)\le0\). But substituting
\(r_\alpha(x_0)=s(x_0)\) into \eqref{Riccati} and using the displayed strict differential inequality gives
\(r_\alpha'(x_0)-s'(x_0)>0\), a contradiction. Hence \eqref{RatioLower} holds for all \(x>0\).
\end{proof}

The following coefficient comparison is the elementary source of the sharp power constant appearing throughout the paper.

\begin{lemma}\label{lemma:power-unweighted}
Let \(\mu>-1\) and \(q>-1\). Then
\begin{equation}\label{PowerUnweightedUpper}
 \int_0^x t^{q-\mu}I_\mu(t)\,\dd t
 \le K_{\mu,q}x^{q-\mu}I_{\mu+1}(x),
 \qquad x>0,
\end{equation}
where
\begin{equation}\label{Kdef}
 K_{\mu,q}=\max\left\{1,\frac{2(\mu+1)}{q+1}\right\}.
\end{equation}
The constant \(K_{\mu,q}\) is best possible. Moreover,
\begin{equation}\label{PowerUnweightedLower}
 \int_0^x t^{q-\mu}I_\mu(t)\,\dd t
 \ge L_{\mu,q}x^{q-\mu}I_{\mu+1}(x),
 \qquad x>0,
\end{equation}
where
\begin{equation}\label{Ldef}
 L_{\mu,q}=\min\left\{1,\frac{2(\mu+1)}{q+1}\right\},
\end{equation}
and \(L_{\mu,q}\) is also best possible. Equality holds in both estimates for all \(x>0\) when \(q=2\mu+1\).
\end{lemma}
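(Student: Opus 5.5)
Expand both sides in power series via \eqref{Iseries} and compare coefficients termwise. Integrating term by term,
\[
 \int_0^x t^{q-\mu}I_\mu(t)\,\dd t=\sum_{k\ge0}\frac{2^{-2k-\mu}x^{2k+q+1}}{k!\,\Gamma(k+\mu+1)(2k+q+1)},
\]
which converges because \(q>-1\). On the other side,
\[
 x^{q-\mu}I_{\mu+1}(x)=\sum_{k\ge0}\frac{2^{-2k-\mu-1}x^{2k+q+1}}{k!\,\Gamma(k+\mu+2)} .
\]
Both series run over the same powers \(x^{2k+q+1}\). The ratio of the \(k\)-th coefficients (left over right) is
\[
 c_k=\frac{2(k+\mu+1)}{2k+q+1}.
\]
Since every coefficient is positive, we get \(\min_k c_k\le\) quotient \(\le\sup_k c_k\) for every \(x>0\).

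Next we find the range of \(c_k\). Write \(c_k=(2k+2\mu+2)/(2k+q+1)\). This is a Möbius function of \(k\) and hence monotone in \(k\). Its value at \(k=0\) is \(2(\mu+1)/(q+1)\), and its limit as \(k\to\infty\) is \(1\). Therefore \(\sup_k c_k=\max\{1,2(\mu+1)/(q+1)\}=K_{\mu,q}\) and \(\inf_k c_k=L_{\mu,q}\), which gives \eqref{PowerUnweightedUpper} and \eqref{PowerUnweightedLower}. When \(q=2\mu+1\) we have \(c_k\equiv1\), so equality holds identically.

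For optimality, study the quotient \(Q(x)\) of the two sides at the endpoints.
\begin{itemize}
 \item As \(x\downarrow0\) the \(k=0\) terms dominate, so \(Q(x)\to c_0=2(\mu+1)/(q+1)\).
 \item As \(x\to\infty\) we have \(Q(x)\to1\). One way to see this: the coefficients \(c_k\) tend to \(1\), and as \(x\to\infty\) the mass of the series shifts to large \(k\). More concretely, for any \(N\) the first \(N\) terms are \(O(x^{2N+q+1})\), which is negligible against the exponentially growing full series. Alternatively, apply l'Hôpital with \(I_{\mu}/I_{\mu+1}\to1\) and \eqref{LogDerivatives}.
\end{itemize}
Since the quotient approaches both \(c_0\) and \(1\), no constant smaller than \(K_{\mu,q}\) works in the upper bound, and none larger than \(L_{\mu,q}\) works in the lower bound.

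The only step needing care is the large-\(x\) limit used for sharpness. I would handle it with the truncation argument: fix \(\varepsilon>0\) and choose \(N\) with \(|c_k-1|<\varepsilon\) for \(k\ge N\). The tail of the series then has quotient within \(\varepsilon\) of \(1\). The finitely many head terms are polynomially bounded while the series for \(x^{q-\mu}I_{\mu+1}(x)\) grows like \(e^x\), so their contribution to the quotient vanishes as \(x\to\infty\).
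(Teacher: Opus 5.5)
Your proposal is correct and follows the paper's proof. Both compare the series termwise, with coefficient ratio $2(k+\mu+1)/(2k+q+1)$, which is monotone in $k$, equals $2(\mu+1)/(q+1)$ at $k=0$ and tends to $1$, and both get sharpness from the two extremal coefficients. Your truncation argument for $Q(x)\to1$ as $x\to\infty$ spells out the ``limiting coefficient'' step that the paper only asserts. One small correction: your $\min_k c_k$ should read $\inf_k c_k$, since the value $1$ is not attained when $q<2\mu+1$.
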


\begin{proof}
From \eqref{Iseries},
\[
 \int_0^x t^{q-\mu}I_\mu(t)\,\dd t
 =\sum_{k=0}^{\infty}
 \frac{x^{2k+q+1}}{2^{2k+\mu}k!\Gamma(k+\mu+1)(2k+q+1)},
\]
while
\[
 x^{q-\mu}I_{\mu+1}(x)
 =\sum_{k=0}^{\infty}
 \frac{x^{2k+q+1}}{2^{2k+\mu+1}k!\Gamma(k+\mu+2)}.
\]
The ratio of the corresponding coefficients is
\begin{equation}\label{CoefficientRatio}
 A_k=\frac{2(k+\mu+1)}{2k+q+1}.
\end{equation}
The sequence \(A_k\) is monotone, since the sign of \(A_{k+1}-A_k\) is the sign of \(q-2\mu-1\), and \(A_k\to1\). Hence all coefficient ratios lie between \(L_{\mu,q}\) and \(K_{\mu,q}\). This proves \eqref{PowerUnweightedUpper} and \eqref{PowerUnweightedLower}. Sharpness follows from the first coefficient if \(q<2\mu+1\), from the limiting coefficient if \(q>2\mu+1\), and from coefficient equality if \(q=2\mu+1\).
\end{proof}

\begin{remark}\label{remark:classical}
For \(q=0\) and \(\mu>-1/2\), Lemma \ref{lemma:power-unweighted} gives
\[
 \int_0^x \frac{I_\mu(t)}{t^\mu}\,\dd t
 \le 2(\mu+1)\frac{I_{\mu+1}(x)}{x^\mu},
\]
which is the unweighted endpoint inequality underlying Gaunt's open problem. The coefficient proof shows at the same time why the constant is sharp at the origin.
\end{remark}

\section{Admissible weights and the main endpoint theorem}\label{sec:admissible}

\subsection{Exact one-sided power weights}

\begin{definition}\label{def:admissible}
Let \(q>-1\). A positive, finite and measurable function \(w\) on \((0,\infty)\) is called an upper \(q\)-power weight if
\begin{equation}\label{UpperPowerWeight}
 v_q(x):=\frac{w(x)}{x^q}
 \quad\text{is nondecreasing on }(0,\infty).
\end{equation}
Equivalently, for \(0<t\le x\),
\begin{equation}\label{UpperPowerRatio}
 \frac{w(t)}{w(x)}\le\left(\frac tx\right)^q.
\end{equation}
If \(w\) is absolutely continuous, \eqref{UpperPowerWeight} is implied by
\begin{equation}\label{UpperDifferential}
 \frac{w'(x)}{w(x)}\ge\frac qx
 \quad\text{for a.e. }x>0.
\end{equation}
For every fixed upper endpoint \(x\), condition \eqref{UpperPowerRatio} gives
\(w(t)\le w(x)x^{-q}t^q\) on \((0,x]\); hence the integrals below are finite because \(q>-1\) and \(t^{-\mu}I_\mu(t)=O(1)\) as \(t\downarrow0\).
\end{definition}

For \(\theta\in(0,1)\) define the threshold
\begin{equation}\label{Xdef}
 X_{\mu,q}(\theta)=
 \begin{cases}
 \displaystyle \frac{2(\mu+2)\theta}{1-\theta}, & q\ge0,\\[1.2em]
 \displaystyle
 \max\left\{
 \frac{2(\mu+2)\theta}{1-\theta},
 \frac{(2\mu+2)\theta-q+
 \sqrt{\{(2\mu+2)\theta-q\}^2-4(1-\theta)q(2\mu+2)}}
 {2(1-\theta)}
 \right\},& -1<q<0.
 \end{cases}
\end{equation}
Also put
\begin{equation}\label{betadef}
 \beta_{\mu,q}(\theta)=
 \frac{X_{\mu,q}(\theta)}{X_{\mu,q}(\theta)+2\mu+2}.
\end{equation}
When \(q\ge0\) this reduces to
\begin{equation}\label{BetaPositive}
 \beta_{\mu,q}(\theta)=\frac{(\mu+2)\theta}{\mu+1+\theta}.
\end{equation}

\begin{lemma}\label{lemma:growth}
Let \(\mu>-1\), \(q>-1\) and \(0<\theta<1\). If \(x\ge X_{\mu,q}(\theta)\), then
\begin{equation}\label{GrowthIneqs}
 r_\mu(x)+\frac qx\ge\theta,
 \qquad
 r_{\mu+1}(x)+\frac{q+1}{x}\ge\theta,
 \qquad
 r_\mu(x)\ge\beta_{\mu,q}(\theta).
\end{equation}
Consequently, for \(X_{\mu,q}(\theta)\le t\le x\),
\begin{equation}\label{GrowthY}
 Y_{\mu,q}(t)\le \exp\!\left(-\theta(x-t)\right)Y_{\mu,q}(x),
 \qquad
 Z_{\mu,q}(t)\le \exp\!\left(-\theta(x-t)\right)Z_{\mu,q}(x).
\end{equation}
\end{lemma}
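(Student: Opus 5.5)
The plan is to derive all three inequalities in \eqref{GrowthIneqs} from Lemma~\ref{lemma:ratio}, and then to integrate the logarithmic derivatives \eqref{LogDerivatives}. Write \(s_\alpha(x)=x/(x+2\alpha+2)\); this is increasing in \(x\).

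\emph{Third inequality.} For \(x\ge X:=X_{\mu,q}(\theta)\), Lemma~\ref{lemma:ratio} with \(\alpha=\mu\) gives
\[
r_\mu(x)>s_\mu(x)\ge s_\mu(X)=\beta_{\mu,q}(\theta),
\]
which is exactly the definition \eqref{betadef}. When \(q\ge0\), substituting \(X=2(\mu+2)\theta/(1-\theta)\) gives
\[
\frac{X}{X+2\mu+2}=\frac{(\mu+2)\theta}{(\mu+2)\theta+(\mu+1)(1-\theta)}=\frac{(\mu+2)\theta}{\mu+1+\theta},
\]
which confirms \eqref{BetaPositive}.

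\emph{Second inequality.} Since \(q+1>0\), it suffices to show \(r_{\mu+1}(x)\ge\theta\). Lemma~\ref{lemma:ratio} with \(\alpha=\mu+1\) gives \(r_{\mu+1}(x)>x/(x+2\mu+4)\). The condition \(x/(x+2\mu+4)\ge\theta\) is equivalent to \(x\ge2(\mu+2)\theta/(1-\theta)\), and this lower bound is always at most \(X\).

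\emph{First inequality.} If \(q\ge0\), I drop \(q/x\) and use \(r_\mu>s_\mu\ge s_{\mu+1}\); the case above then gives \(r_\mu\ge\theta\) for \(x\ge X\). If \(-1<q<0\), I need \(s_\mu(x)+q/x\ge\theta\). Clearing denominators, this becomes the quadratic condition
\[
(1-\theta)x^2-\bigl((2\mu+2)\theta-q\bigr)x-q(2\mu+2)\ge0 .
\]
Because \(-q(2\mu+2)>0\), the discriminant is
\[
\bigl((2\mu+2)\theta-q\bigr)^2+4(1-\theta)(-q)(2\mu+2)>0 .
\]
The leading coefficient is positive, so the inequality holds for \(x\) beyond the larger root. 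That root is the second entry of the max in \eqref{Xdef}. The sign in front of the \(4(1-\theta)q(2\mu+2)\) term must be checked against \eqref{Xdef}. Since \(q<0\), the printed \(-4(1-\theta)q(2\mu+2)\) is positive, so the formula matches. This algebra is the only delicate step.

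\emph{Consequence \eqref{GrowthY}.} For \(X\le t\le x\), integrate \eqref{LogDerivatives} over \([t,x]\). The first two inequalities bound each integrand below by \(\theta\) on \([X,\infty)\), so
\[
\log Y_{\mu,q}(x)-\log Y_{\mu,q}(t)\ge\theta(x-t),
\]
and the same holds for \(Z_{\mu,q}\). Exponentiating gives \eqref{GrowthY}.
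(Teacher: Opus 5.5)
Your proposal is correct and follows the paper's proof essentially step for step. You use Lemma~\ref{lemma:ratio} for the three ratio bounds, take the positive root of the quadratic when $-1<q<0$, and integrate \eqref{LogDerivatives}; for the root step you argue from the positive leading coefficient, whereas the paper notes that $h(x)=x/(x+2\mu+2)+q/x$ is increasing.

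There is one slip. Clearing denominators in $s_\mu(x)+q/x\ge\theta$ gives constant term $+q(2\mu+2)$, not $-q(2\mu+2)$. Your discriminant and larger root are those of the correct quadratic, so the conclusion is unaffected.
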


\begin{proof}
The bound on \(r_\mu\) follows immediately from Lemma \ref{lemma:ratio}. Since \(q+1>0\),
\[
 r_{\mu+1}(x)+\frac{q+1}{x}\ge r_{\mu+1}(x)>\frac{x}{x+2\mu+4}\ge\theta
\]
whenever \(x\ge2(\mu+2)\theta/(1-\theta)\). If \(q\ge0\), then
\[
 r_\mu(x)+\frac qx\ge r_\mu(x)>\frac{x}{x+2\mu+2}\ge\theta
\]
under the same threshold. If \(-1<q<0\), define
\[
 h(x)=\frac{x}{x+2\mu+2}+\frac qx.
\]
Then \(h'(x)=(2\mu+2)/(x+2\mu+2)^2-q/x^2>0\), so \(h\) is strictly increasing. The second entry in \eqref{Xdef} is the unique positive root of \(h(x)=\theta\), obtained from the quadratic
\[
 (1-\theta)x^2+\{q-(2\mu+2)\theta\}x+q(2\mu+2)=0.
\]
Therefore \(h(x)\ge\theta\) for \(x\ge X_{\mu,q}(\theta)\), proving the first inequality. The exponential comparisons \eqref{GrowthY} follow by integrating the logarithmic derivatives in \eqref{LogDerivatives}.
\end{proof}

\begin{theorem}\label{thm:main-weighted}
Let \(\mu>-1\), \(q>-1\), \(0<\gamma<1\), and let \(w\) be an upper \(q\)-power weight. For every \(\theta\in(\gamma,1)\),
\begin{equation}\label{MainWeightedBound}
 \int_0^x \exp\!\left(-\gamma t\right)w(t)t^{-\mu}I_\mu(t)\,\dd t
 \le M_{\mu,q,\gamma}(\theta)
 \exp\!\left(-\gamma x\right)w(x)x^{-\mu}I_{\mu+1}(x),
 \qquad x>0,
\end{equation}
where
\begin{equation}\label{Mgeneral}
 M_{\mu,q,\gamma}(\theta)=
 \max\left\{
 K_{\mu,q}\exp\!\left(\gamma X_{\mu,q}(\theta)\right),
 \frac{1}{\beta_{\mu,q}(\theta)(\theta-\gamma)}
 \right\}.
\end{equation}
\end{theorem}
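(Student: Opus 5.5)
The plan is to split at the threshold \(X=X_{\mu,q}(\theta)\) of Lemma \ref{lemma:growth}. Below \(X\) the tilt costs at most a factor \(e^{\gamma X}\) and Lemma \ref{lemma:power-unweighted} applies. Above \(X\) the exponential growth rate \(\theta>\gamma\) of \(Y_{\mu,q},Z_{\mu,q}\) from Lemma \ref{lemma:growth} absorbs the tilt. Throughout write \(v(x)=w(x)x^{-q}\). The basic reduction is that, for \(0<t\le x\), \eqref{UpperPowerRatio} gives
\[
 w(t)t^{-\mu}I_\mu(t)=v(t)Y_{\mu,q}(t)\le v(x)Y_{\mu,q}(t).
\]
Also the right-hand side of \eqref{MainWeightedBound} equals \(M e^{-\gamma x}v(x)Z_{\mu,q}(x)\). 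So it suffices to bound \(\int_0^x e^{-\gamma t}Y_{\mu,q}(t)\dd t\) by \(M e^{-\gamma x}Z_{\mu,q}(x)\), and the weight then plays no further role.

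\emph{Case \(0<x\le X\).} Write \(e^{-\gamma t}=e^{-\gamma x}e^{\gamma(x-t)}\le e^{-\gamma x}e^{\gamma X}\) for \(0\le t\le x\). Then \eqref{PowerUnweightedUpper} gives
\[
 \int_0^x e^{-\gamma t}Y_{\mu,q}(t)\dd t\le e^{\gamma X}e^{-\gamma x}K_{\mu,q}Z_{\mu,q}(x),
\]
which is at most \(M_{\mu,q,\gamma}(\theta)e^{-\gamma x}Z_{\mu,q}(x)\).

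\emph{Case \(x>X\).} Split the integral into a head over \([0,X]\) and a tail over \([X,x]\).

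For the head, use \(e^{-\gamma t}\le1\) and \eqref{PowerUnweightedUpper} at the point \(X\). Then apply \eqref{GrowthY} with \(t=X\), which gives \(Z_{\mu,q}(X)\le e^{-\theta(x-X)}Z_{\mu,q}(x)\). Together these give the head bound
\[
 K_{\mu,q}e^{-\theta(x-X)}Z_{\mu,q}(x)=K_{\mu,q}e^{\gamma X}\,e^{-\gamma x}\,e^{-(\theta-\gamma)(x-X)}Z_{\mu,q}(x).
\]

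For the tail, apply \eqref{GrowthY} to \(Y_{\mu,q}\), giving \(Y_{\mu,q}(t)\le e^{-\theta(x-t)}Y_{\mu,q}(x)\) for \(X\le t\le x\). Integrate \(e^{-\gamma t}e^{-\theta(x-t)}\) explicitly over \([X,x]\). Then convert \(Y_{\mu,q}(x)=Z_{\mu,q}(x)/r_\mu(x)\le Z_{\mu,q}(x)/\beta_{\mu,q}(\theta)\), using the third inequality in \eqref{GrowthIneqs}. This yields the tail bound
\[
 \frac{1-e^{-(\theta-\gamma)(x-X)}}{\beta_{\mu,q}(\theta)(\theta-\gamma)}\,e^{-\gamma x}Z_{\mu,q}(x).
\]

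The step that needs care is combining the two pieces so that the result is the \emph{maximum} in \eqref{Mgeneral} rather than the sum. Put \(\lambda=e^{-(\theta-\gamma)(x-X)}\in(0,1)\). Then the head bound is \(\lambda\) times the first entry of \eqref{Mgeneral}, and the tail bound is \((1-\lambda)\) times the second entry, each multiplied by \(e^{-\gamma x}Z_{\mu,q}(x)\). Their sum is a convex combination, hence at most \(M_{\mu,q,\gamma}(\theta)e^{-\gamma x}Z_{\mu,q}(x)\).

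For this to work, the head must be estimated through \(Z_{\mu,q}(X)\) and then pushed to \(x\) via \eqref{GrowthY}, rather than bounded crudely by \(e^{\gamma X}\). Finally, multiplying back by \(v(x)\) gives \eqref{MainWeightedBound} for all \(x>0\).
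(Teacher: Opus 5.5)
Your proposal is correct and follows essentially the same argument as the paper. You split at \(X=X_{\mu,q}(\theta)\), bound the head by \(K_{\mu,q}Z_{\mu,q}(X)\) and push it to \(x\) via \eqref{GrowthY}, bound the tail using \(Y_{\mu,q}(x)=Z_{\mu,q}(x)/r_\mu(x)\le Z_{\mu,q}(x)/\beta_{\mu,q}(\theta)\), and recognize the convex combination with weight \(e^{-(\theta-\gamma)(x-X)}\). The only difference is cosmetic: you factor out \(v(x)=w(x)x^{-q}\) once at the start, whereas the paper invokes the weight monotonicity separately in each step.
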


\begin{proof}
Set \(X=X_{\mu,q}(\theta)\), \(\beta=\beta_{\mu,q}(\theta)\), \(\delta=\theta-\gamma\), and
\[
 J(x)=\exp\!\left(\gamma x\right)\int_0^x \exp\!\left(-\gamma t\right)w(t)t^{-\mu}I_\mu(t)\,\dd t.
\]
Let
\[
 A=K_{\mu,q}\exp\!\left(\gamma X\right),
 \qquad
 C=\frac1{\beta\delta}.
\]
If \(0<x\le X\), then \eqref{UpperPowerRatio} and Lemma \ref{lemma:power-unweighted} give
\[
\begin{aligned}
 J(x)
 &\le \exp\!\left(\gamma X\right)\int_0^x w(t)t^{-\mu}I_\mu(t)\,\dd t \\
 &\le \exp\!\left(\gamma X\right)w(x)x^{-q}
       \int_0^x t^{q-\mu}I_\mu(t)\,\dd t \\
 &\le A\,w(x)x^{-\mu}I_{\mu+1}(x).
\end{aligned}
\]
Now assume \(x>X\) and split \(J=J_0+J_1\), where \(J_0\) is the contribution of \((0,X)\) and \(J_1\) the contribution of \((X,x)\). The previous estimate at \(X\) gives
\begin{equation}\label{JXbound}
 J(X)\le A w(X)X^{-\mu}I_{\mu+1}(X).
\end{equation}
For \(X\le t\le x\), \eqref{UpperPowerWeight} and Lemma \ref{lemma:growth} imply
\[
 w(t)t^{-\mu}I_\mu(t)\le \exp\!\left(-\theta(x-t)\right)w(x)x^{-\mu}I_\mu(x),
\]
and
\[
 w(t)t^{-\mu}I_{\mu+1}(t)\le \exp\!\left(-\theta(x-t)\right)w(x)x^{-\mu}I_{\mu+1}(x)
\]
whenever \(X\le t\le x\). Thus
\[
 J_0(x)\le A\exp\!\left(-\delta(x-X)\right)w(x)x^{-\mu}I_{\mu+1}(x).
\]
Also,
\[
\begin{aligned}
 J_1(x)
 &\le w(x)x^{-\mu}I_\mu(x)
    \int_X^x \exp\!\left(-\delta(x-t)\right)\,\dd t\\
 &\le \frac{1-\exp\!\left(-\delta(x-X)\right)}{\delta r_\mu(x)}
    w(x)x^{-\mu}I_{\mu+1}(x)\\
 &\le C\{1-\exp\!\left(-\delta(x-X)\right)\}
    w(x)x^{-\mu}I_{\mu+1}(x).
\end{aligned}
\]
Therefore
\[
 J(x)\le
 \{A\exp\!\left(-\delta(x-X)\right)+C(1-\exp\!\left(-\delta(x-X)\right))\}
 w(x)x^{-\mu}I_{\mu+1}(x),
\]
and the coefficient is at most \(\max\{A,C\}\). Multiplying by \(\exp\!\left(-\gamma x\right)\) proves the theorem.
\end{proof}

\begin{corollary}\label{cor:closed-q-positive}
Let \(\mu>-1\), \(q\ge0\), \(0<\gamma<1\), and let \(w\) be an upper \(q\)-power weight. Then \eqref{MainWeightedBound} holds with
\begin{equation}\label{Mpositive}
 M_{\mu,q,\gamma}(\theta)=
 \max\left\{
 K_{\mu,q}\exp\!\left(\frac{2\gamma(\mu+2)\theta}{1-\theta}\right),
 \frac{\mu+1+\theta}{(\mu+2)\theta(\theta-\gamma)}
 \right\},
 \qquad \gamma<\theta<1.
\end{equation}
In particular, the choice \(\theta=(1+\gamma)/2\) gives
\begin{equation}\label{Mclosed}
 M_{\mu,q,\gamma}^{\rm cl}=
 \max\left\{
 K_{\mu,q}\exp\!\left(\frac{2\gamma(\mu+2)(1+\gamma)}{1-\gamma}\right),
 \frac{2(2\mu+3+\gamma)}{(\mu+2)(1-\gamma^2)}
 \right\}.
\end{equation}
\end{corollary}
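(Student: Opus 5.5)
This corollary is a direct specialization of Theorem~\ref{thm:main-weighted}, so the plan is to substitute the explicit values of \(X_{\mu,q}(\theta)\) and \(\beta_{\mu,q}(\theta)\) valid for \(q\ge0\) into \eqref{Mgeneral}, and then evaluate at \(\theta=(1+\gamma)/2\).

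First, for \(q\ge0\), definition \eqref{Xdef} gives \(X_{\mu,q}(\theta)=2(\mu+2)\theta/(1-\theta)\). Hence the first entry of \eqref{Mgeneral} becomes \(K_{\mu,q}\exp\{2\gamma(\mu+2)\theta/(1-\theta)\}\). Next, I would confirm the reduction \eqref{BetaPositive}. Substituting into \eqref{betadef}, I would clear the common factor \(1/(1-\theta)\) in numerator and denominator. The denominator becomes \(2(\mu+2)\theta+(2\mu+2)(1-\theta)=2(\mu+1+\theta)\). Therefore \(\beta=(\mu+2)\theta/(\mu+1+\theta)\), and the second entry \(1/\{\beta(\theta-\gamma)\}\) equals \((\mu+1+\theta)/\{(\mu+2)\theta(\theta-\gamma)\}\). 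This gives \eqref{Mpositive}, valid for every \(\theta\in(\gamma,1)\), since the theorem applies to any such \(\theta\).

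For \eqref{Mclosed}, I would put \(\theta=(1+\gamma)/2\), which lies in \((\gamma,1)\) because \(0<\gamma<1\). Then \(1-\theta=(1-\gamma)/2\), so \(\theta/(1-\theta)=(1+\gamma)/(1-\gamma)\) and the exponent is \(2\gamma(\mu+2)(1+\gamma)/(1-\gamma)\). Also \(\theta-\gamma=(1-\gamma)/2\) and \(\mu+1+\theta=(2\mu+3+\gamma)/2\). The second term is therefore
\[
\frac{(2\mu+3+\gamma)/2}{(\mu+2)\,\frac{1+\gamma}{2}\,\frac{1-\gamma}{2}}=\frac{2(2\mu+3+\gamma)}{(\mu+2)(1-\gamma^2)}.
\]

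There is no genuine obstacle here. The only step needing care is the algebra for \(\beta\), specifically combining \(X+2\mu+2\) over the common denominator \(1-\theta\). That is where an error would most likely occur, so I would double-check it.
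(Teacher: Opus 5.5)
Your proposal is correct and is the same argument as the paper's proof, which substitutes the \(q\ge0\) threshold from \eqref{Xdef} and the reduced \(\beta\) from \eqref{BetaPositive} into \eqref{Mgeneral}. Your identity \(X+2\mu+2=2(\mu+1+\theta)/(1-\theta)\), which is the one step behind \eqref{BetaPositive} that the paper does not spell out, and your arithmetic at \(\theta=(1+\gamma)/2\) are both accurate.
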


\begin{proof}
Substitute \eqref{Xdef} and \eqref{BetaPositive} into \eqref{Mgeneral}.
\end{proof}

\subsection{Approximate power weights}

The monotonicity assumption \eqref{UpperPowerWeight} can be weakened. The exponential tilt can absorb a controlled failure of monotonicity.

\begin{definition}\label{def:approx}
Let \(q>-1\) and \(\eta\ge0\). A positive, finite and measurable function \(w\) belongs to the approximate upper class \(\mathcal A_q(\eta)\) if
\begin{equation}\label{ApproxClass}
 \frac{w(t)}{t^q}\le \exp\!\left(\eta(x-t)\right)\frac{w(x)}{x^q},
 \qquad 0<t\le x<\infty.
\end{equation}
If \(w\) is absolutely continuous, \eqref{ApproxClass} is implied by
\begin{equation}\label{ApproxDerivative}
 \frac{w'(x)}{w(x)}\ge \frac qx-\eta
 \quad\text{for a.e. }x>0.
\end{equation}
More generally, if
\begin{equation}\label{RhoDerivative}
 \frac{w'(x)}{w(x)}\ge \frac qx-\rho(x),
\end{equation}
where \(\rho\ge0\) is locally integrable and
\begin{equation}\label{RhoAverage}
 \int_t^x \rho(s)\,\dd s\le\eta(x-t),
 \qquad 0<t\le x,
\end{equation}
then \(w\in\mathcal A_q(\eta)\).
\end{definition}

\begin{theorem}\label{thm:approx}
Let \(\mu>-1\), \(q>-1\), \(0<\gamma<1\), and let \(w\in\mathcal A_q(\eta)\) with \(0\le\eta<1-\gamma\). For every
\begin{equation}\label{ThetaApproxRange}
 \gamma+\eta<\theta<1,
\end{equation}
one has
\begin{equation}\label{ApproxBound}
 \int_0^x \exp\!\left(-\gamma t\right)w(t)t^{-\mu}I_\mu(t)\,\dd t
 \le M_{\mu,q,\gamma}^{(\eta)}(\theta)
 \exp\!\left(-\gamma x\right)w(x)x^{-\mu}I_{\mu+1}(x),
 \qquad x>0,
\end{equation}
where
\begin{equation}\label{Mapprox}
 M_{\mu,q,\gamma}^{(\eta)}(\theta)=
 \max\left\{
 K_{\mu,q}\exp\!\left((\gamma+\eta)X_{\mu,q}(\theta)\right),
 \frac{1}{\beta_{\mu,q}(\theta)(\theta-\gamma-\eta)}
 \right\}.
\end{equation}
\end{theorem}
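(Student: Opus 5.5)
The plan is to repeat the proof of Theorem~\ref{thm:main-weighted} verbatim, with the comparison \eqref{UpperPowerRatio} replaced by \eqref{ApproxClass}. The extra factor \(\exp(\eta(x-t))\) is absorbed into the tilt in each of the two regions. Set \(X=X_{\mu,q}(\theta)\), \(\beta=\beta_{\mu,q}(\theta)\) and \(\delta=\theta-\gamma-\eta>0\), which is positive by \eqref{ThetaApproxRange}. As before, let \(J(x)=e^{\gamma x}\int_0^x e^{-\gamma t}w(t)t^{-\mu}I_\mu(t)\,\dd t\). Put \(A=K_{\mu,q}e^{(\gamma+\eta)X}\) and \(C=1/(\beta\delta)\).

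\textbf{Small endpoints.} Fix \(0<x\le X\). For \(0<t\le x\), \eqref{ApproxClass} gives
\[
 e^{\gamma(x-t)}w(t)\le e^{(\gamma+\eta)(x-t)}w(x)x^{-q}t^q\le e^{(\gamma+\eta)X}w(x)x^{-q}t^q .
\]
Integrating and applying Lemma~\ref{lemma:power-unweighted} yields
\[
 J(x)\le A\,w(x)x^{-\mu}I_{\mu+1}(x),
\]
and in particular this holds at \(x=X\).

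\textbf{Large endpoints.} Fix \(x>X\) and split \(J=J_0+J_1\) at \(X\).

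For \(J_0\), write \(J_0(x)=e^{\gamma(x-X)}J(X)\). By the first step, \(J(X)\le A\,w(X)X^{-\mu}I_{\mu+1}(X)\). Now combine \eqref{ApproxClass} at the pair \((X,x)\) with the \(Z\)-part of \eqref{GrowthY}. This gives \(w(X)X^{-\mu}I_{\mu+1}(X)\le e^{\eta(x-X)}e^{-\theta(x-X)}w(x)x^{-\mu}I_{\mu+1}(x)\). Hence
\[
 J_0(x)\le A e^{-\delta(x-X)}w(x)x^{-\mu}I_{\mu+1}(x).
\]

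For \(J_1\), the same combination with the \(Y\)-part of \eqref{GrowthY} gives
\[
 w(t)t^{-\mu}I_\mu(t)\le e^{-(\theta-\eta)(x-t)}w(x)x^{-\mu}I_\mu(x),\qquad X\le t\le x .
\]
Multiplying by \(e^{\gamma(x-t)}\) and integrating over \((X,x)\) gives the factor \((1-e^{-\delta(x-X)})/\delta\). Then \(I_\mu(x)=I_{\mu+1}(x)/r_\mu(x)\) together with \(r_\mu(x)\ge\beta\) from Lemma~\ref{lemma:growth} yields
\[
 J_1(x)\le C\bigl(1-e^{-\delta(x-X)}\bigr)w(x)x^{-\mu}I_{\mu+1}(x).
\]

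\textbf{Conclusion.} Adding the two bounds, the coefficient of \(w(x)x^{-\mu}I_{\mu+1}(x)\) is a convex combination of \(A\) and \(C\), so it is at most \(\max\{A,C\}=M^{(\eta)}_{\mu,q,\gamma}(\theta)\). Multiplying by \(e^{-\gamma x}\) gives \eqref{ApproxBound}.

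The only delicate point is the bookkeeping of exponents. The weight comparison no longer comes for free, so the decay rate available on \([X,x]\) drops from \(\theta\) to \(\theta-\eta\). It is exactly this loss that forces the condition \(\theta>\gamma+\eta\) and changes \(\gamma X\) to \((\gamma+\eta)X\) in the first term of \eqref{Mapprox}. Nothing else in Lemma~\ref{lemma:growth} or Lemma~\ref{lemma:power-unweighted} needs modification, since both concern only the pure power scales \(Y_{\mu,q}\) and \(Z_{\mu,q}\).
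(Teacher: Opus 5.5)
Your proposal is correct and is essentially the paper's own proof. It uses the same split at \(X=X_{\mu,q}(\theta)\), the same absorption of the defect factor \(e^{\eta(x-t)}\) into the effective tilt \(\gamma+\eta\) on \((0,X]\) and into the reduced decay rate \(\theta-\eta\) on \([X,x]\) via Lemma~\ref{lemma:growth}, and the same convex-combination bound by \(\max\{A,C\}\); your explicit derivation of the \(J_0\) estimate from \eqref{ApproxClass} at the pair \((X,x)\) together with the \(Z\)-part of \eqref{GrowthY} simply spells out a step the paper states in one line.
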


\begin{proof}
Set \(X=X_{\mu,q}(\theta)\), \(\beta=\beta_{\mu,q}(\theta)\), and
\(\delta=\theta-\gamma-\eta>0\). Put
\[
 J(x)=\exp\!\left(\gamma x\right)\int_0^x\exp\!\left(-\gamma t\right)w(t)t^{-\mu}I_\mu(t)\,\dd t.
\]
Let
\[
 A=K_{\mu,q}\exp\!\left((\gamma+\eta)X\right),
 \qquad
 C=\frac1{\beta\delta}.
\]
If \(0<x\le X\), then \eqref{ApproxClass} gives
\[
 w(t)\le \exp\!\left(\eta(x-t)\right)w(x)x^{-q}t^q,
 \qquad 0<t\le x.
\]
Hence, using \(\exp\{(\gamma+\eta)(x-t)\}\le \exp\{(\gamma+\eta)X\}\) and Lemma \ref{lemma:power-unweighted},
\[
\begin{aligned}
 J(x)
 &\le w(x)x^{-q}\int_0^x
     \exp\!\left((\gamma+\eta)(x-t)\right)t^{q-\mu}I_\mu(t)\,\dd t \\
 &\le \exp\!\left((\gamma+\eta)X\right)w(x)x^{-q}
     \int_0^x t^{q-\mu}I_\mu(t)\,\dd t \\
 &\le A\,w(x)x^{-\mu}I_{\mu+1}(x).
\end{aligned}
\]
Now let \(x>X\) and split \(J=J_0+J_1\), where \(J_0\) is the contribution of \((0,X)\) and \(J_1\) the contribution of \((X,x)\). The preceding estimate at \(X\) gives
\[
 J(X)\le A w(X)X^{-\mu}I_{\mu+1}(X).
\]
For \(X\le t\le x\), condition \eqref{ApproxClass} and Lemma \ref{lemma:growth} imply
\[
 w(t)t^{-\mu}I_\mu(t)
 \le \exp\!\left(-\theta(x-t)+\eta(x-t)\right)w(x)x^{-\mu}I_\mu(x),
\]
and similarly
\[
 w(t)t^{-\mu}I_{\mu+1}(t)
 \le \exp\!\left(-\theta(x-t)+\eta(x-t)\right)w(x)x^{-\mu}I_{\mu+1}(x).
\]
Consequently,
\[
 J_0(x)=\exp\!\left(\gamma(x-X)\right)J(X)
 \le A\exp\!\left(-\delta(x-X)\right)w(x)x^{-\mu}I_{\mu+1}(x),
\]
and
\[
\begin{aligned}
 J_1(x)
 &\le w(x)x^{-\mu}I_\mu(x)
    \int_X^x \exp\!\left(-\delta(x-t)\right)\,\dd t\\
 &\le \frac{1-\exp\!\left(-\delta(x-X)\right)}{\delta r_\mu(x)}
    w(x)x^{-\mu}I_{\mu+1}(x)\\
 &\le C\{1-\exp\!\left(-\delta(x-X)\right)\}
    w(x)x^{-\mu}I_{\mu+1}(x).
\end{aligned}
\]
Thus
\[
 J(x)\le
 \{A\exp\!\left(-\delta(x-X)\right)+C(1-\exp\!\left(-\delta(x-X)\right))\}
 w(x)x^{-\mu}I_{\mu+1}(x),
\]
and the coefficient is at most \(\max\{A,C\}\). Multiplication by \(\exp(-\gamma x)\) proves \eqref{ApproxBound}.
\end{proof}

\begin{remark}\label{remark:exp-defect}
Theorem \ref{thm:approx} is useful for weights of the form
\[
   w(x)=x^q\exp\!\left(-\eta x\right)L(x),
\]
where \(L\) is nondecreasing. Such a weight need not satisfy \eqref{UpperPowerWeight} when \(\eta>0\); for instance, this fails when \(L\) is constant. It nevertheless belongs to \(\mathcal A_q(\eta)\). The price paid is exactly the replacement of the tilt \(\gamma\) by the effective tilt \(\gamma+\eta\).
\end{remark}

\section{Two-sided weighted endpoint estimates}\label{sec:twosided}

The upper theory is one-sided because an upper endpoint majorant requires lower endpoint values of the weight to be controlled by the value at \(x\). Reversing the monotonicity gives lower endpoint estimates.

\begin{definition}\label{def:lowerweight}
A positive, finite and measurable weight \(w\) is called a lower \(q\)-power weight if
\begin{equation}\label{LowerPowerWeight}
 \frac{w(x)}{x^q}\quad\text{is nonincreasing on }(0,\infty).
\end{equation}
Equivalently,
\begin{equation}\label{LowerPowerRatio}
 \frac{w(t)}{w(x)}\ge\left(\frac tx\right)^q,
 \qquad 0<t\le x.
\end{equation}
\end{definition}

\begin{theorem}\label{thm:lower}
Let \(\mu>-1\), \(q>-1\), \(0\le\gamma<1\), and let \(w\) be a lower \(q\)-power weight. Then
\begin{equation}\label{LowerWeighted}
 \int_0^x \exp\!\left(-\gamma t\right)w(t)t^{-\mu}I_\mu(t)\,\dd t
 \ge L_{\mu,q}\exp\!\left(-\gamma x\right)w(x)x^{-\mu}I_{\mu+1}(x),
 \qquad x>0,
\end{equation}
where \(L_{\mu,q}\) is defined in \eqref{Ldef}. The constant is sharp when \(\gamma=0\) and \(w(x)=x^q\).
\end{theorem}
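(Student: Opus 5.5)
Factor out the endpoint tilt and reduce everything to the sharp power inequality of Lemma~\ref{lemma:power-unweighted}. Since \(\gamma\ge0\) and \(0<t\le x\), we have \(e^{-\gamma t}\ge e^{-\gamma x}\). Because the integrand \(w(t)t^{-\mu}I_\mu(t)\) is positive, this gives
\[
 \int_0^x e^{-\gamma t}w(t)t^{-\mu}I_\mu(t)\,\dd t\ \ge\ e^{-\gamma x}\int_0^x w(t)t^{-\mu}I_\mu(t)\,\dd t .
\]
So the tilt only helps in the lower direction, and no threshold splitting of the kind used in Theorem~\ref{thm:main-weighted} is needed.

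Next use the lower \(q\)-power property \eqref{LowerPowerRatio}: for \(0<t\le x\), \(w(t)\ge w(x)x^{-q}t^q\). Inserting this pointwise bound gives
\[
 \int_0^x w(t)t^{-\mu}I_\mu(t)\,\dd t\ \ge\ w(x)x^{-q}\int_0^x t^{q-\mu}I_\mu(t)\,\dd t .
\]
By \eqref{PowerUnweightedLower}, the remaining integral is at least \(L_{\mu,q}x^{q-\mu}I_{\mu+1}(x)\). Multiplying through, the factors \(x^{-q}\) and \(x^{q}\) cancel, and we obtain \eqref{LowerWeighted}. The inequality holds even if the weighted integral is infinite, since then there is nothing to prove; measurability and positivity of \(w\) suffice for the comparisons.

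For sharpness, take \(\gamma=0\) and \(w(x)=x^q\). This weight is a lower (indeed exact) \(q\)-power weight, and the inequality becomes exactly \eqref{PowerUnweightedLower}. Lemma~\ref{lemma:power-unweighted} already asserts that \(L_{\mu,q}\) is best possible there. Concretely, the extremal behaviour comes from the limiting coefficient ratio \(A_k\to1\) as \(x\to\infty\) when \(q<2\mu+1\), from the first coefficient ratio \(A_0=2(\mu+1)/(q+1)\) as \(x\downarrow0\) when \(q>2\mu+1\), and from identity when \(q=2\mu+1\).

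There is no real obstacle. The only point requiring care is the direction of the monotonicity in \eqref{LowerPowerRatio}, which must be read as a lower bound for \(w(t)\) in terms of the terminal value \(w(x)\). Sharpness is then inherited directly from the earlier lemma rather than proved afresh.
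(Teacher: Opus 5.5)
Your proof is correct and follows the paper's argument exactly. You bound $e^{-\gamma t}\ge e^{-\gamma x}$, insert the lower power-ratio bound $w(t)\ge w(x)x^{-q}t^q$, apply \eqref{PowerUnweightedLower}, and inherit sharpness from Lemma~\ref{lemma:power-unweighted}. Your identification of where $L_{\mu,q}$ is approached ($x\to\infty$ when $q<2\mu+1$, $x\downarrow0$ when $q>2\mu+1$) is also correct, and it properly reverses the cases the paper states for $K_{\mu,q}$.
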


\begin{proof}
Using \eqref{LowerPowerRatio} and \(\exp\!\left(-\gamma t\right)\ge\exp\!\left(-\gamma x\right)\),
\[
\begin{aligned}
 \int_0^x \exp\!\left(-\gamma t\right)w(t)t^{-\mu}I_\mu(t)\,\dd t
 &\ge \exp\!\left(-\gamma x\right)w(x)x^{-q}
     \int_0^x t^{q-\mu}I_\mu(t)\,\dd t  \\
 &\ge L_{\mu,q}\exp\!\left(-\gamma x\right)w(x)x^{-\mu}I_{\mu+1}(x),
\end{aligned}
\]
by Lemma \ref{lemma:power-unweighted}.
\end{proof}

For exact power weights both inequalities are available simultaneously.

\begin{corollary}\label{cor:twosided-power}
Let \(\mu>-1\), \(q>-1\), \(0<\gamma<1\), and \(\gamma<\theta<1\). Then
\begin{equation}\label{TwoSidedPower}
 L_{\mu,q}
 \le
 \frac{\displaystyle\int_0^x \exp\!\left(-\gamma t\right)t^{q-\mu}I_\mu(t)\,\dd t}
 {\displaystyle\exp\!\left(-\gamma x\right)x^{q-\mu}I_{\mu+1}(x)}
 \le
 M_{\mu,q,\gamma}(\theta),
 \qquad x>0.
\end{equation}
At \(\gamma=0\) the best possible upper and lower constants in \eqref{TwoSidedPower} are respectively \(K_{\mu,q}\) and \(L_{\mu,q}\).
\end{corollary}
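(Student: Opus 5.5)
The plan is to apply the two one-sided theorems to the exact power weight \(w(x)=x^q\), and then read off the sharp constants at \(\gamma=0\) from Lemma \ref{lemma:power-unweighted}.

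\textbf{The two inequalities for \(0<\gamma<1\).} The weight \(w(x)=x^q\) is both an upper and a lower \(q\)-power weight, since \(x^{-q}w(x)\equiv1\) is simultaneously nondecreasing and nonincreasing.

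For the right-hand inequality in \eqref{TwoSidedPower}, apply Theorem \ref{thm:main-weighted} with this \(w\) and the given \(\theta\in(\gamma,1)\). Because \(w(t)t^{-\mu}=t^{q-\mu}\), the bound \eqref{MainWeightedBound} reads
\[
\int_0^x e^{-\gamma t}t^{q-\mu}I_\mu(t)\dd t\le M_{\mu,q,\gamma}(\theta)\,e^{-\gamma x}x^{q-\mu}I_{\mu+1}(x).
\]
Dividing by the positive quantity \(e^{-\gamma x}x^{q-\mu}I_{\mu+1}(x)\) gives the upper estimate.

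For the left-hand inequality, apply Theorem \ref{thm:lower} to the same weight. It is valid for \(0\le\gamma<1\) and gives the lower bound \(L_{\mu,q}\) after the same division.

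\textbf{Sharpness at \(\gamma=0\).} The quotient in \eqref{TwoSidedPower} then reduces to
\[
\frac{\int_0^x t^{q-\mu}I_\mu(t)\dd t}{x^{q-\mu}I_{\mu+1}(x)}.
\]
Lemma \ref{lemma:power-unweighted} shows this quotient lies in \([L_{\mu,q},K_{\mu,q}]\) and that both endpoints are best possible.

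The theorems above are stated for \(\gamma>0\), so at \(\gamma=0\) I rely on the lemma directly rather than on the \(M\)-bound. The lemma's sharpness argument is the coefficient comparison \eqref{CoefficientRatio}. The quotient tends to \(A_0=2(\mu+1)/(q+1)\) as \(x\downarrow0\). It tends to \(\lim A_k=1\) as \(x\to\infty\); this holds because the power series are dominated by their high-order terms, which one may note in a sentence.

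\textbf{Expected obstacle.} There is no genuine difficulty: the corollary is a direct specialization. The only points needing care are positivity of the denominator, so the division is legitimate, and the interpretation at \(\gamma=0\), where the upper constant quoted is \(K_{\mu,q}\) rather than \(M\).
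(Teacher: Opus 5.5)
Your proposal is correct and follows the paper's proof exactly: the upper bound is Theorem~\ref{thm:main-weighted} with \(w(x)=x^q\), the lower bound is Theorem~\ref{thm:lower} with the same weight, and sharpness at \(\gamma=0\) comes from Lemma~\ref{lemma:power-unweighted}. Your remark that the \(\gamma=0\) upper constant must be taken from the lemma rather than from \(M_{\mu,q,\gamma}(\theta)\) is a sensible clarification of the statement.
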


\begin{proof}
The lower bound is Theorem \ref{thm:lower}; the upper bound is Theorem \ref{thm:main-weighted} with \(w(x)=x^q\). The sharpness at \(\gamma=0\) is contained in Lemma \ref{lemma:power-unweighted}.
\end{proof}

\begin{remark}\label{remark:twosided}
For \(\gamma>0\) the lower constant in \eqref{TwoSidedPower} is not generally sharp. The endpoint limit at zero equals \(2(\mu+1)/(q+1)\), whereas the endpoint limit at infinity equals \(1/(1-\gamma)\). The lower estimate is nevertheless global, explicit and of the correct endpoint scale.
\end{remark}

\section{Shifted order and admissible endpoint choices}\label{sec:shifted}

\subsection{The principal shifted endpoint}

Theorem \ref{thm:main-weighted} gives a systematic shifted-order family.

\begin{theorem}\label{thm:shifted}
Let \(\nu,n,a\in\RR\) satisfy
\begin{equation}\label{ShiftRange}
 \nu+n>-1,\qquad a+n>-1.
\end{equation}
Let \(0<\gamma<1\) and \(\gamma<\theta<1\). Then
\begin{equation}\label{ShiftedBound}
 \int_0^x \exp\!\left(-\gamma t\right)t^a\frac{I_{\nu+n}(t)}{t^\nu}\,\dd t
 \le M_{\nu+n,a+n,\gamma}(\theta)
 \exp\!\left(-\gamma x\right)x^a\frac{I_{\nu+n+1}(x)}{x^\nu},
 \qquad x>0.
\end{equation}
If \(a+n\ge0\), then
\begin{equation}\label{ShiftedClosedConstant}
 M_{\nu+n,a+n,\gamma}(\theta)=
 \max\left\{
 K_{\nu+n,a+n}\exp\!\left(\frac{2\gamma(\nu+n+2)\theta}{1-\theta}\right),
 \frac{\nu+n+1+\theta}{(\nu+n+2)\theta(\theta-\gamma)}
 \right\},
\end{equation}
where
\begin{equation}\label{ShiftedK}
 K_{\nu+n,a+n}=\max\left\{1,\frac{2(\nu+n+1)}{a+n+1}\right\}.
\end{equation}
\end{theorem}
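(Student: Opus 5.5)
The plan is to obtain the statement as a direct specialisation of Theorem~\ref{thm:main-weighted}, applied to the exact power weight \(w(x)=x^{q}\) with \(\mu=\nu+n\) and \(q=a+n\).

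First I will check that the hypotheses of Theorem~\ref{thm:main-weighted} hold. The range condition \eqref{ShiftRange} gives exactly \(\mu=\nu+n>-1\) and \(q=a+n>-1\). The weight \(w(x)=x^{a+n}\) is positive, finite and measurable. Moreover, \(w(x)x^{-q}\equiv1\) is nondecreasing, so \(w\) is an upper \(q\)-power weight in the sense of Definition~\ref{def:admissible}. Hence for every \(\theta\in(\gamma,1)\) inequality \eqref{MainWeightedBound} holds with constant \(M_{\nu+n,a+n,\gamma}(\theta)\).

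Next I will rewrite both sides of \eqref{MainWeightedBound}. The integrand is
\[
 w(t)t^{-\mu}I_\mu(t)=t^{a+n}t^{-\nu-n}I_{\nu+n}(t)=t^{a}t^{-\nu}I_{\nu+n}(t).
\]
Likewise the endpoint is
\[
 w(x)x^{-\mu}I_{\mu+1}(x)=x^{a}x^{-\nu}I_{\nu+n+1}(x).
\]
With these identifications \eqref{MainWeightedBound} becomes exactly \eqref{ShiftedBound}. Finiteness of the left side follows from the remark in Definition~\ref{def:admissible}, since \(q>-1\).

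For the closed form, when \(a+n\ge0\) Corollary~\ref{cor:closed-q-positive} applies with \(\mu=\nu+n\) and \(q=a+n\ge0\). Substituting into \eqref{Mpositive} gives \eqref{ShiftedClosedConstant}. Definition \eqref{Kdef} gives \eqref{ShiftedK}.

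There is no real obstacle here. The only point needing care is that \(\nu\) itself may be below \(-1\) when \(n>0\), so neither \(\nu>-1\) nor \(a>-1\) is available separately. The argument never needs them: all hypotheses are stated through the combinations \(\nu+n\) and \(a+n\), which is why \eqref{ShiftRange} is the natural condition.
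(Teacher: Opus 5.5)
Your proposal is correct and follows essentially the same route as the paper. The paper also sets \(\mu=\nu+n\), \(q=a+n\), \(w(x)=x^{q}\) in Theorem~\ref{thm:main-weighted}, rewrites \(t^{q-\mu}I_\mu(t)=t^aI_{\nu+n}(t)/t^\nu\) and the endpoint accordingly, and (implicitly) reads off the closed constant for \(a+n\ge0\) from Corollary~\ref{cor:closed-q-positive}.
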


\begin{proof}
Set \(\mu=\nu+n\) and \(q=a+n\) in Theorem \ref{thm:main-weighted} with \(w(x)=x^q\). Then
\[
 t^{q-\mu}I_\mu(t)=t^{a+n-(\nu+n)}I_{\nu+n}(t)=t^a\frac{I_{\nu+n}(t)}{t^\nu},
\]
and the endpoint has the asserted form.
\end{proof}

\begin{corollary}\label{cor:moment}
Let \(\nu>-1\), \(m>-1\), \(0<\gamma<1\) and \(\gamma<\theta<1\). Then
\begin{equation}\label{MomentBound}
 \int_0^x \exp\!\left(-\gamma t\right)t^m\frac{I_\nu(t)}{t^\nu}\,\dd t
 \le M_{\nu,m,\gamma}(\theta)
 \exp\!\left(-\gamma x\right)x^m\frac{I_{\nu+1}(x)}{x^\nu},
 \qquad x>0.
\end{equation}
For \(m\ge0\) the constant is given by \eqref{Mpositive} with \((\mu,q)=(\nu,m)\).
\end{corollary}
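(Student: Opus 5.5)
This corollary is the special case \(n=0\), \(a=m\) of Theorem~\ref{thm:shifted}. Equivalently, it is Theorem~\ref{thm:main-weighted} applied with \(\mu=\nu\), \(q=m\) and the exact power weight \(w(x)=x^m\). I will use the second route, since it avoids reindexing.

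First I check the hypotheses. We have \(\mu=\nu>-1\) and \(q=m>-1\), and the tilt satisfies \(0<\gamma<1\), as Theorem~\ref{thm:main-weighted} requires. The weight \(w(x)=x^m\) is positive, finite and measurable. Moreover \(w(x)x^{-q}=1\) is constant, hence nondecreasing, so \(w\) is an upper \(m\)-power weight in the sense of Definition~\ref{def:admissible}. Substituting into \eqref{MainWeightedBound} turns the integrand \(w(t)t^{-\mu}I_\mu(t)\) into \(t^m I_\nu(t)t^{-\nu}\). It turns the endpoint \(w(x)x^{-\mu}I_{\mu+1}(x)\) into \(x^m I_{\nu+1}(x)x^{-\nu}\). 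This gives exactly \eqref{MomentBound}, with constant \(M_{\nu,m,\gamma}(\theta)\) as in \eqref{Mgeneral}, valid for every \(\theta\in(\gamma,1)\).

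For the closed form when \(m\ge0\), I invoke Corollary~\ref{cor:closed-q-positive} with \((\mu,q)=(\nu,m)\). In that regime \(X_{\nu,m}(\theta)=2(\nu+2)\theta/(1-\theta)\), and \(\beta_{\nu,m}(\theta)\) is given by \eqref{BetaPositive}. Inserting these into \eqref{Mgeneral} yields \eqref{Mpositive}.

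There is no real obstacle here. The only points to verify are that the hypotheses \(\nu>-1\) and \(m>-1\) are the ones the main theorem needs, and that the power weight meets the monotonicity condition trivially.
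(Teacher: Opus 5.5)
Your proposal is correct and matches the paper. The paper gives no separate proof: the corollary is the case \(n=0\), \(a=m\) of Theorem~\ref{thm:shifted}, which is itself proved exactly as you argue, by applying Theorem~\ref{thm:main-weighted} with the exact power weight \(w(x)=x^q\), with the closed constant for \(m\ge0\) read off from Corollary~\ref{cor:closed-q-positive}.
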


\subsection{Other endpoint orders}

It is natural to ask whether \(I_{\mu+1}\) in the endpoint may be replaced by \(I_{\mu+\kappa}\). The next result shows that orders above \(\mu+1\) are impossible with the same power of \(x\), whereas lower endpoint orders are admissible under the standard order-monotonicity range.

\begin{lemma}\label{lemma:order}
For each \(x>0\), the map \(\alpha\mapsto I_\alpha(x)\) is decreasing on \([-1/2,\infty)\).
\end{lemma}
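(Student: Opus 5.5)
We prove the monotonicity in the order through the standard integral representation valid for $\alpha>-1/2$, with the endpoint $\alpha=-1/2$ handled by continuity. For $\alpha>-1/2$ and $x>0$ we use the Poisson-type representation (NIST 10.32.2)
\[
 I_\alpha(x)=\frac{(x/2)^\alpha}{\sqrt{\pi}\,\Gamma(\alpha+\tfrac12)}\int_{-1}^{1}(1-s^2)^{\alpha-1/2}e^{xs}\dd s .
\]
The goal is to show that the map $\alpha\mapsto I_\alpha(x)$ is strictly decreasing.

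We write $I_\alpha(x)=(x/2)^\alpha\sum_k c_k(\alpha)$ and instead work with the series \eqref{Iseries} directly, since that route is cleaner. Fix $x>0$ and $\alpha<\beta$ with $\alpha\ge-1/2$. We compare $I_\alpha(x)$ and $I_\beta(x)$ term by term only partially: the factor $(x/2)^{\alpha}$ versus $(x/2)^\beta$ goes the wrong way when $x>2$. So a pure termwise argument fails, and we return to the integral representation.

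The cleanest route is the classical argument via the Turán-type/recurrence identity. From \eqref{BasicBesselIds},
\[
 I_{\alpha-1}(x)-I_{\alpha+1}(x)=\frac{2\alpha}{x}I_\alpha(x)\ge0 \quad\text{for } \alpha\ge0,
\]
which only compares orders differing by $2$. For continuous order we therefore use Soni's inequality / the known result $\partial_\alpha I_\alpha(x)<0$ for $\alpha\ge-1/2$. Concretely, we will differentiate the integral representation in the form (after substituting $s=\cos\phi$)
\[
 I_\alpha(x)=\frac{(x/2)^\alpha}{\sqrt\pi\,\Gamma(\alpha+\frac12)}\int_0^\pi e^{x\cos\phi}\sin^{2\alpha}\phi\dd\phi .
\]
Instead of differentiating, we use the ratio $I_{\beta}(x)/I_\alpha(x)$ and Lemma \ref{lemma:ratio}'s companion upper bound. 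Specifically, for integer steps the claim $r_\alpha(x)=I_{\alpha+1}(x)/I_\alpha(x)<1$ holds for $\alpha\ge-1/2$. This follows from \eqref{Riccati} by the same first-crossing argument used in Lemma \ref{lemma:ratio}: at a first point where $r_\alpha=1$ we would have $r_\alpha'=-(2\alpha+1)/x\le0$, with strictness when $\alpha>-1/2$, while $r_\alpha(0^+)=0$. At $\alpha=-1/2$ one has $r=\tanh<1$ explicitly.

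For non-integer increments we use log-concavity in the order. By the product formula / Neumann series, $\alpha\mapsto \log I_\alpha(x)$ is concave on $(-1,\infty)$; this is a known result of Baricz \cite{Baricz2010}, and it also follows because $\alpha\mapsto 1/\Gamma(k+\alpha+1)$ is log-concave termwise and sums of log-concave sequences in this Pólya-frequency setting remain log-concave. Given concavity, the slope over $[\alpha,\beta]$ is bounded by the slope over $[\alpha,\alpha+1]$ whenever $\beta\le\alpha+1$. That latter slope equals $\log r_\alpha(x)<0$, so the map is decreasing on each unit step, and chaining unit steps gives monotonicity on all of $[-1/2,\infty)$.

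The main obstacle is justifying log-concavity in $\alpha$ rigorously. The first thing to try is differentiating the series: $\partial_\alpha^2\log I_\alpha$ involves the variance of $\psi(k+\alpha+1)$ under the weights $a_k\propto (x/2)^{2k}/(k!\Gamma(k+\alpha+1))$, minus the average of $\psi'(k+\alpha+1)$. Showing that this variance is dominated by $\sum a_k\psi'(k+\alpha+1)$ is exactly the delicate point. If that comparison resists, we would instead cite the classical monotonicity result directly (Watson \cite{Watson1944}; see also Jones \cite{Jones1968} and Nasell \cite{Nasell1978}), since the lemma is standard.
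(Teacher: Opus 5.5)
\textbf{The statement is false, so no proof can succeed.} It fails on \([-1/2,0)\). Since \(I_{-1/2}(x)=\sqrt{2/(\pi x)}\cosh x\), at \(x=1\) one has
\[
I_{-1/2}(1)=\sqrt{2/\pi}\cosh 1\approx1.2312,
\qquad
I_0(1)=\sum_{k\ge0}4^{-k}(k!)^{-2}\approx1.2661 .
\]
So \(I_0(1)>I_{-1/2}(1)\) even though \(0>-1/2\). This is not an accident of one value of \(x\). By the expansion \(I_\alpha(x)=e^x(2\pi x)^{-1/2}\{1-(4\alpha^2-1)/(8x)+O(x^{-2})\}\), used in Proposition~\ref{prop:expansions}, the \(1/x\) coefficient increases with \(\alpha\) on \([-1/2,0]\). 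Consequently \(I_0(x)-I_{-1/2}(x)\sim e^x/(8x\sqrt{2\pi x})>0\) as \(x\to\infty\).

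What does hold on \([-1/2,\infty)\) is only the unit-step Soni-type inequality \(I_{\alpha+1}(x)<I_\alpha(x)\). Your Riccati first-crossing argument for \(r_\alpha<1\) proves that part correctly. Continuous-order monotonicity holds on \([0,\infty)\), and by the same expansion on no interval \([a,\infty)\) with \(a<0\). The paper's proof is only a citation of this classical material, so it does not justify \([-1/2,0)\) either. The lemma should be restricted to \([0,\infty)\), and its use in the proof of Theorem~\ref{thm:endpoint-order}(ii) then requires \(\mu+\kappa\ge0\).

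\textbf{Where your argument breaks.} The failing step is the chord-slope comparison. If \(f(\alpha)=\log I_\alpha(x)\) is concave, chord slopes from a fixed left endpoint are nonincreasing in the right endpoint. Hence for \(\alpha<\beta\le\alpha+1\) concavity gives
\[
\frac{f(\beta)-f(\alpha)}{\beta-\alpha}\ \ge\ f(\alpha+1)-f(\alpha)=\log r_\alpha(x).
\]
This is a lower bound by a negative number and carries no sign information. The upper bound you used is the one valid for convex functions. For instance, at \(\alpha=-1/2\), \(\beta=0\), \(x=1\), the left side equals \(2\log\{I_0(1)/I_{-1/2}(1)\}>0\).

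\textbf{The log-concavity justification is also invalid.} Each series term is log-concave in \(\alpha\), but sums of log-concave functions need not be log-concave; sums preserve log-convexity, not log-concavity. That property must genuinely be imported from the literature.

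\textbf{How concavity can be used correctly.} Granting log-concavity, apply it from the left. Since \(\partial_\alpha I_\alpha(x)|_{\alpha=0}=-K_0(x)\), you have \(f'(0)=-K_0(x)/I_0(x)<0\). As \(f'\) is nonincreasing, \(f\) is strictly decreasing on \([0,\infty)\). Similarly, \(f'_+(\alpha)\le f(\alpha)-f(\alpha-1)=\log r_{\alpha-1}(x)<0\) for \(\alpha\ge1/2\). Neither device reaches \([-1/2,0)\), and the counterexample shows nothing can.
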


\begin{proof}
This is a classical Soni-type monotonicity theorem for modified Bessel functions; see Soni \cite{Soni1965}, Jones \cite{Jones1968}, and the discussion in Nasell \cite{Nasell1974}. We use it only in this standard form.
\end{proof}

\begin{theorem}\label{thm:endpoint-order}
Let \(\mu>-1\), \(q>-1\), \(0<\gamma<1\), and let \(\kappa\in\RR\).

\noindent\textnormal{(i)} If \(\kappa>1\) and \(\mu+\kappa>-1\), then there is no finite constant \(C\) such that
\begin{equation}\label{ImpossibleEndpoint}
 \int_0^x \exp\!\left(-\gamma t\right)t^{q-\mu}I_\mu(t)\,\dd t
 \le C\exp\!\left(-\gamma x\right)x^{q-\mu}I_{\mu+\kappa}(x)
 \qquad\text{for all }x>0.
\end{equation}

\noindent\textnormal{(ii)} If
\begin{equation}\label{EndpointOrderRange}
 -\frac12\le \mu+\kappa\le \mu+1,
\end{equation}
then for every \(\theta\in(\gamma,1)\),
\begin{equation}\label{LowerOrderEndpoint}
 \int_0^x \exp\!\left(-\gamma t\right)t^{q-\mu}I_\mu(t)\,\dd t
 \le M_{\mu,q,\gamma}(\theta)
 \exp\!\left(-\gamma x\right)x^{q-\mu}I_{\mu+\kappa}(x),
 \qquad x>0.
\end{equation}
\end{theorem}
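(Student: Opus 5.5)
Part (ii) reduces directly to Theorem~\ref{thm:main-weighted} with \(w(x)=x^q\). That theorem gives
\[
 \int_0^x e^{-\gamma t}t^{q-\mu}I_\mu(t)\,\dd t\le M_{\mu,q,\gamma}(\theta)e^{-\gamma x}x^{q-\mu}I_{\mu+1}(x).
\]
It therefore suffices to show \(I_{\mu+1}(x)\le I_{\mu+\kappa}(x)\) for every \(x>0\). The range \eqref{EndpointOrderRange} places both orders \(\mu+\kappa\le\mu+1\) in \([-1/2,\infty)\): the lower order satisfies \(\mu+\kappa\ge-1/2\) by assumption, and the upper order satisfies \(\mu+1\ge\mu+\kappa\ge-1/2\). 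Lemma~\ref{lemma:order} then gives \(I_{\mu+1}(x)\le I_{\mu+\kappa}(x)\), with equality when \(\kappa=1\). Substituting this into the right-hand side finishes (ii). The only point to watch is that Lemma~\ref{lemma:order} is invoked only on \([-1/2,\infty)\), which is exactly what \eqref{EndpointOrderRange} guarantees.

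For (i), I would compare the two sides as \(x\downarrow0\); the limit \(x\to\infty\) cannot work, since both sides grow like \(e^{(1-\gamma)x}\). As \(x\downarrow0\), we have \(t^{-\mu}I_\mu(t)\to 2^{-\mu}/\Gamma(\mu+1)>0\) and \(e^{-\gamma t}\to1\), while \(q>-1\). Hence the left-hand side is asymptotic to
\[
 \frac{x^{q+1}}{2^\mu\Gamma(\mu+1)(q+1)}.
\]
On the right, \(\mu+\kappa>-1\) means \(1/\Gamma(\mu+\kappa+1)\) is finite and positive, so the leading term of \eqref{Iseries} gives
\[
 x^{q-\mu}I_{\mu+\kappa}(x)\sim\frac{x^{q+\kappa}}{2^{\mu+\kappa}\Gamma(\mu+\kappa+1)}.
\]
The ratio of left to right therefore behaves like a positive constant times \(x^{1-\kappa}\), which tends to \(+\infty\) because \(\kappa>1\). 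No finite \(C\) can work.

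The one subtle step in (i) is the nonvanishing of the leading coefficient on the right, which is where the hypothesis \(\mu+\kappa>-1\) enters. I expect no real obstacle here: the argument is just these two leading-order expansions.
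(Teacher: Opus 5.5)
Your part (i) is correct and is the same argument as the paper's. Part (ii) follows the paper's reduction to Theorem~\ref{thm:main-weighted}, but the step you flag as ``the only point to watch'' is where it breaks. Lemma~\ref{lemma:order} is false on $[-1/2,0)$. The classical facts are that $\alpha\mapsto I_\alpha(x)$ decreases on $[0,\infty)$, plus the unit-step inequality $I_{\alpha+1}(x)<I_\alpha(x)$ for $\alpha\ge-1/2$. For example, $I_{-1/2}(2)=\pi^{-1/2}\cosh 2\approx2.1226<I_0(2)\approx2.2796$. More generally, $I_\alpha(x)=e^x(2\pi x)^{-1/2}\{1-(4\alpha^2-1)/(8x)+O(x^{-2})\}$ increases in $\alpha\in(-1/2,0)$ for large $x$. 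Hence $I_{\mu+1}\le I_{\mu+\kappa}$ fails for large $x$ whenever $\mu+\kappa<0$ and $\mu+1<|\mu+\kappa|$. This case is allowed by \eqref{EndpointOrderRange} when $\mu<-1/2$.

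This cannot be repaired inside your argument, because (ii) itself is false there; the paper's proof makes the identical appeal. Take $\mu=-0.9$, $q=2\mu+1=-0.8$ and $\kappa=0.4$, so $\mu+\kappa=-1/2$ and $\mu+1=0.1$. Since $\{t^{0.1}I_{0.1}(t)\}'=t^{0.1}I_{-0.9}(t)$, the left side of \eqref{LowerOrderEndpoint} is at least $e^{-\gamma x}x^{0.1}I_{0.1}(x)$. So (ii) at $x=2$ would force $M_{\mu,q,\gamma}(\theta)\ge I_{0.1}(2)/I_{-1/2}(2)\approx2.2603/2.1226\approx1.065$. Now take $\gamma=10^{-4}$ and $\theta=0.99$. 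Then $K_{\mu,q}=1$, $X_{\mu,q}(\theta)=217.8$ and $\beta_{\mu,q}(\theta)=217.8/218$. Hence $M_{\mu,q,\gamma}(\theta)=\max\{e^{0.02178},\,1/(\beta_{\mu,q}(\theta)\cdot 0.9899)\}\approx1.022$, a contradiction.

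Your proof is valid wherever the comparison can actually be justified:
\begin{itemize}
\item If $\mu+\kappa\ge0$, use monotonicity on $[0,\infty)$.
\item If $-1/2\le\mu+\kappa<0$ and $\mu+1\ge-(\mu+\kappa)$, use $I_{-\nu}=I_\nu+\tfrac{2}{\pi}\sin(\nu\pi)K_\nu>I_\nu$ for the Macdonald function $K_\nu$, with $\nu=-(\mu+\kappa)\in(0,1/2]$. Then apply monotonicity on $[0,\infty)$ to get $I_{\mu+1}\le I_\nu$.
\item In the remaining region, (ii) holds only after multiplying the constant by $\sup_{x>0}I_{\mu+1}(x)/I_{\mu+\kappa}(x)$. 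This factor is finite because the ratio tends to $0$ at the origin and to $1$ at infinity.
\end{itemize}
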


\begin{proof}
For (i), the first term of \eqref{Iseries} gives
\[
 \int_0^x \exp\!\left(-\gamma t\right)t^{q-\mu}I_\mu(t)\,\dd t
 \sim \frac{x^{q+1}}{2^\mu(q+1)\Gamma(\mu+1)},
\]
whereas
\[
 \exp\!\left(-\gamma x\right)x^{q-\mu}I_{\mu+\kappa}(x)
 \sim \frac{x^{q+\kappa}}{2^{\mu+\kappa}\Gamma(\mu+\kappa+1)}.
\]
The quotient is asymptotic to a positive multiple of \(x^{1-\kappa}\), which diverges as \(x\downarrow0\) when \(\kappa>1\).

For (ii), Theorem \ref{thm:main-weighted} with \(w(x)=x^q\) gives the bound with \(I_{\mu+1}(x)\). By Lemma \ref{lemma:order}, condition \eqref{EndpointOrderRange} implies \(I_{\mu+\kappa}(x)\ge I_{\mu+1}(x)\). Hence the same constant works for the larger endpoint.
\end{proof}

In shifted notation this gives the following endpoint flexibility.

\begin{corollary}\label{cor:shifted-endpoint-order}
Let \(\nu+n>-1\), \(a+n>-1\), \(0<\gamma<1\), and \(\gamma<\theta<1\). If
\begin{equation}\label{ShiftedEndpointRange}
 -\frac12\le \nu+n+k\le \nu+n+1,
\end{equation}
then
\begin{equation}\label{ShiftedEndpointOrderBound}
 \int_0^x \exp\!\left(-\gamma t\right)t^a\frac{I_{\nu+n}(t)}{t^\nu}\,\dd t
 \le M_{\nu+n,a+n,\gamma}(\theta)
 \exp\!\left(-\gamma x\right)x^a\frac{I_{\nu+n+k}(x)}{x^\nu},
 \qquad x>0.
\end{equation}
If \(k>1\) and \(\nu+n+k>-1\), no finite bound of this form can hold for all \(x>0\).
\end{corollary}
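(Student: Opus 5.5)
This is a direct translation of Theorem~\ref{thm:endpoint-order} into shifted notation, with the substitution \(\mu=\nu+n\), \(q=a+n\), \(\kappa=k\). The hypotheses \(\nu+n>-1\) and \(a+n>-1\) are exactly \(\mu>-1\) and \(q>-1\). The range \eqref{ShiftedEndpointRange} becomes \(-\tfrac12\le\mu+\kappa\le\mu+1\), which is \eqref{EndpointOrderRange}. The integrand transforms as in the proof of Theorem~\ref{thm:shifted}:
\[
 t^{q-\mu}I_\mu(t)=t^{a}\,t^{-\nu}I_{\nu+n}(t),
\]
and similarly
\[
 x^{q-\mu}I_{\mu+\kappa}(x)=x^{a}\,x^{-\nu}I_{\nu+n+k}(x).
\]
With these identifications, \eqref{LowerOrderEndpoint} becomes \eqref{ShiftedEndpointOrderBound} verbatim, with the same constant \(M_{\nu+n,a+n,\gamma}(\theta)\).

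For the upper bound, I would invoke part~(ii) of Theorem~\ref{thm:endpoint-order} directly. To keep the step self-contained, the underlying argument runs as follows. Theorem~\ref{thm:shifted} gives the bound with endpoint \(I_{\nu+n+1}(x)\). Lemma~\ref{lemma:order} applies because both orders \(\nu+n+k\) and \(\nu+n+1\) lie in \([-1/2,\infty)\); the second does since \(\nu+n+1>0\). Since \(\nu+n+k\le\nu+n+1\), the lemma gives \(I_{\nu+n+k}(x)\ge I_{\nu+n+1}(x)\) for every \(x>0\). Multiplying by the positive factor \(e^{-\gamma x}x^{a-\nu}\) preserves the inequality, so the right-hand side only increases.

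For the impossibility assertion, I would apply part~(i) of Theorem~\ref{thm:endpoint-order} with \(\kappa=k>1\) and \(\mu+\kappa=\nu+n+k>-1\). Explicitly, as \(x\downarrow0\) the left side is asymptotic to a positive multiple of \(x^{a+n+1}\), because the integrand behaves like \(t^{a+n}\) times a constant and \(a+n>-1\) makes it integrable. The right side behaves like a positive multiple of \(x^{a+n+k}\). The Gamma factor \(\Gamma(\nu+n+k+1)\) is finite and positive since \(\nu+n+k>-1\). The quotient of left side by right side therefore grows like \(x^{1-k}\to\infty\), which excludes any finite constant.

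The only point needing care is checking that Lemma~\ref{lemma:order} is legitimately applicable, i.e.\ that both orders lie in its monotonicity range; this is exactly what the lower limit \(-1/2\) in \eqref{ShiftedEndpointRange} guarantees. Otherwise the proof is pure bookkeeping of the substitution.
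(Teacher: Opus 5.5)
Your reduction is the paper's own: the corollary is Theorem~\ref{thm:endpoint-order} rewritten with $\mu=\nu+n$, $q=a+n$, $\kappa=k$. Your impossibility argument for $k>1$ is fine. The gap is exactly the step you singled out as the only delicate one. Lemma~\ref{lemma:order}, which both you and the paper invoke, is false on $[-1/2,0)$. For fixed $x>0$, the map $\alpha\mapsto I_\alpha(x)$ is decreasing on $[0,\infty)$ (Cochran's theorem), but not on $[-1/2,\infty)$.

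For instance, $I_{-1/2}(2)=\pi^{-1/2}\cosh 2\approx 2.1226<I_0(2)\approx 2.2796$. More generally, $I_\alpha(x)=\frac{e^x}{\sqrt{2\pi x}}\{1-\frac{4\alpha^2-1}{8x}+O(x^{-2})\}$, so for large $x$ the order enters only through $\alpha^2$. Hence $I_{\nu+n+k}(x)\ge I_{\nu+n+1}(x)$ fails for large $x$ whenever $-\frac12\le \nu+n+k<-(\nu+n+1)$. This subrange of \eqref{ShiftedEndpointRange} is nonempty as soon as $\nu+n<-1/2$.

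This is not only a defect of the argument; the statement itself is false there. Take $n=0$, $\nu=-0.9$, $a=-0.8$, $k=0.4$. Then $\nu+n=-0.9$, $a+n=-0.8$, $\nu+n+k=-1/2$, and all hypotheses hold.

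\begin{itemize}
\item Since $\bigl(x^{0.1}I_{0.1}(x)\bigr)'=x^{0.1}I_{-0.9}(x)$ and $e^{\gamma(x-t)}\ge 1$, the left side of \eqref{ShiftedEndpointOrderBound} is at least $e^{-\gamma x}x^{0.1}I_{0.1}(x)$.
\item So the bound would force $M_{-0.9,-0.8,\gamma}(\theta)\ge I_{0.1}(2)/I_{-1/2}(2)\approx 2.2603/2.1226\approx 1.065$.
\item However, $K_{-0.9,-0.8}=1$. For $\gamma=10^{-4}$ and $\theta=0.98$ one gets $X_{-0.9,-0.8}(\theta)=107.8$ and $\beta\approx 0.9981$.
\item Hence $M_{-0.9,-0.8,\gamma}(\theta)=\max\{e^{0.01078},\,1/(\beta(\theta-\gamma))\}\approx 1.022$, a contradiction.
\end{itemize}
Conceptually, when $q=2\mu+1$ the constant can be pushed to $1$ by letting $\gamma\downarrow0$ and $\theta\uparrow1$. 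Meanwhile $\sup_x I_{\mu+1}(x)/I_{-1/2}(x)>1$ whenever $0<\mu+1<1/2$.

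What your argument does prove is the corollary under the stronger hypothesis $|\nu+n+k|\le \nu+n+1$:
\begin{itemize}
\item For $\nu+n+k\ge 0$, use monotonicity on $[0,\infty)$.
\item For $\nu+n+k=-\alpha\in[-1/2,0)$, use $I_{-\alpha}=I_\alpha+\frac{2}{\pi}\sin(\alpha\pi)K_\alpha>I_\alpha\ge I_{\nu+n+1}$.
\end{itemize}
In particular, the corollary and Theorem~\ref{thm:endpoint-order}(ii) hold whenever $\nu+n\ge -1/2$, which covers Gaunt's setting. In general, however, the range \eqref{ShiftedEndpointRange} must be strengthened, and the appeal to Lemma~\ref{lemma:order} must be replaced by the argument above.
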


\section{Sharp power-weighted quotients and optimized constants}\label{sec:sharp}

For power weights define
\begin{equation}\label{Rdef}
 R_{\mu,q,\gamma}(x)=
 \frac{\displaystyle\int_0^x \exp\!\left(-\gamma t\right)t^{q-\mu}I_\mu(t)\,\dd t}
 {\displaystyle\exp\!\left(-\gamma x\right)x^{q-\mu}I_{\mu+1}(x)},
 \qquad x>0.
\end{equation}
The sharp endpoint constant is
\begin{equation}\label{MstarDef}
 M^*_{\mu,q,\gamma}=\sup_{x>0}R_{\mu,q,\gamma}(x).
\end{equation}
Theorem \ref{thm:main-weighted} implies \(M^*_{\mu,q,\gamma}<\infty\) for every \(\mu>-1\), \(q>-1\) and \(0<\gamma<1\).

\begin{proposition}\label{prop:limits}
Let \(\mu>-1\), \(q>-1\) and \(0<\gamma<1\). Then
\begin{equation}\label{Rlimit0}
 \lim_{x\downarrow0}R_{\mu,q,\gamma}(x)=\frac{2(\mu+1)}{q+1},
\end{equation}
and
\begin{equation}\label{RlimitInf}
 \lim_{x\to\infty}R_{\mu,q,\gamma}(x)=\frac1{1-\gamma}.
\end{equation}
Consequently,
\begin{equation}\label{MstarLower}
 M^*_{\mu,q,\gamma}\ge
 \max\left\{\frac{2(\mu+1)}{q+1},\frac1{1-\gamma}\right\}.
\end{equation}
\end{proposition}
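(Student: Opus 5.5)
The two limits are handled separately: power series near the origin and L'Hôpital's rule at infinity. The lower bound \eqref{MstarLower} then follows at once, since the supremum of $R_{\mu,q,\gamma}$ dominates both limits.

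\emph{Limit at the origin.} As $t\downarrow0$ we have $e^{-\gamma t}\to1$, and by \eqref{Iseries}, $t^{q-\mu}I_\mu(t)=t^q\{2^{-\mu}/\Gamma(\mu+1)\}(1+O(t))$. Since $q>-1$, the numerator is therefore
\[
\frac{x^{q+1}}{2^\mu(q+1)\Gamma(\mu+1)}(1+O(x)).
\]
The denominator is $x^{q-\mu}I_{\mu+1}(x)(1+O(x))=\dfrac{x^{q+1}}{2^{\mu+1}\Gamma(\mu+2)}(1+O(x))$. The quotient tends to $2\Gamma(\mu+2)/\{(q+1)\Gamma(\mu+1)\}=2(\mu+1)/(q+1)$, which is the first coefficient ratio $A_0$ in \eqref{CoefficientRatio}.

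\emph{Limit at infinity.} Write $N(x)=\int_0^x e^{-\gamma t}t^{q-\mu}I_\mu(t)\,\dd t$ and $D(x)=e^{-\gamma x}x^{q-\mu}I_{\mu+1}(x)$. Since $I_\alpha(x)\sim e^x/\sqrt{2\pi x}$ and $\gamma<1$, we have $D(x)\to\infty$, so L'Hôpital's rule applies provided $N'/D'$ has a limit. By \eqref{LogDerivatives},
\[
D'(x)=D(x)\left\{r_{\mu+1}(x)+\frac{q+1}{x}-\gamma\right\}-x^{q-\mu}e^{-\gamma x}I_{\mu+1}(x)\frac{1}{x}+\cdots .
\]
It is cleaner to differentiate directly:
\[
D'(x)=e^{-\gamma x}\left[\frac{\dd}{\dd x}\{x^{q-\mu}I_{\mu+1}(x)\}-\gamma x^{q-\mu}I_{\mu+1}(x)\right].
\]
Using $\frac{\dd}{\dd x}\{x^{-\mu-1}I_{\mu+1}\}=x^{-\mu-1}I_{\mu+2}$, this gives
\[
D'(x)=e^{-\gamma x}x^{q-\mu}\left[I_{\mu+2}(x)+\frac{q+1}{x}I_{\mu+1}(x)-\gamma I_{\mu+1}(x)\right].
\]
Since $N'(x)=e^{-\gamma x}x^{q-\mu}I_\mu(x)$, we get
\[
\frac{N'}{D'}=\frac{1}{r_\mu(x)\bigl\{r_{\mu+1}(x)+(q+1)/x-\gamma\bigr\}}.
\]
It remains to check $r_\alpha(x)\to1$ as $x\to\infty$. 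This follows from Lemma~\ref{lemma:ratio} for the lower bound, together with $I_{\alpha+1}<I_\alpha$ (or the standard asymptotics $I_\alpha(x)\sim e^x/\sqrt{2\pi x}$) for the upper bound. Hence $N'/D'\to1/(1-\gamma)$, and L'Hôpital's rule (the $\cdot/\infty$ form) gives \eqref{RlimitInf}.

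The only point needing care is the infinite limit. There we must verify that $D'>0$ eventually and that $D\to\infty$, which both follow from the same asymptotics. We must also justify $r_\alpha\to1$ for the orders $\mu,\mu+1>-1$ without relying on Lemma~\ref{lemma:order}, whose range is restricted. I would simply cite the large-argument expansion $I_\alpha(x)=e^x(2\pi x)^{-1/2}(1+O(1/x))$, which is valid for every real $\alpha$.
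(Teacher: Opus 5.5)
Your argument at the origin is the paper's (leading terms of \eqref{Iseries}), but at infinity you take a different route. The paper substitutes $I_\alpha(t)=e^t(2\pi t)^{-1/2}\{1+O(t^{-1})\}$ into the integrand. It then uses an endpoint integration by parts to get the asymptotic form of the numerator, $N(x)\sim e^{(1-\gamma)x}x^{q-\mu-1/2}/\{\sqrt{2\pi}(1-\gamma)\}$, and compares this with the denominator. You instead apply L'H\^opital's rule to the exact identity $N'/D'=1/(r_\mu\{r_{\mu+1}+(q+1)/x-\gamma\})$, so only the ratio limits $r_\mu,r_{\mu+1}\to1$ are needed.

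This spares you justifying the integration of the $O(t^{-1})$ error term, which the paper leaves implicit. The quotient you obtain is also exactly the stationary value in \eqref{StationaryEq}, which makes the link with \eqref{SharpRepresentation} transparent.

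In fact your route needs no large-argument asymptotics at all. Since $(\log D)'=r_{\mu+1}+(q+1)/x-\gamma$, you get $D'>0$ eventually and $D\to\infty$ once $r_{\mu+1}\to1$. For every $\beta>-1$, the recurrence in \eqref{BasicBesselIds} gives $1/r_\beta=r_{\beta+1}+2(\beta+1)/x>r_{\beta+1}$. Lemma~\ref{lemma:ratio}, applied at orders $\beta$ and $\beta+1$, then yields $x/(x+2\beta+2)<r_\beta<1+(2\beta+4)/x$.

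What the paper's route buys in exchange is the asymptotic of the numerator itself. Carried one term further, it gives the $1/x$ correction in Proposition~\ref{prop:expansions}; L'H\^opital alone yields only the limit.

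There are two blemishes to clean up. Your first display for $D'$ has a spurious extra term; by \eqref{LogDerivatives}, $D'=D\{r_{\mu+1}+(q+1)/x-\gamma\}$ exactly. The parenthetical upper bound via $I_{\mu+1}<I_\mu$ is not merely unjustified but false for $-1<\mu<-1/2$ at large $x$, since $r_\mu(x)=1-(\mu+1/2)/x+O(x^{-2})$. Your final choice to rely on the asymptotic expansion, or the recurrence argument above, is the right one.
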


\begin{proof}
The first limit follows from \eqref{Iseries}. For the second, put \(\lambda=1-\gamma\) and \(s=q-\mu-1/2\). The standard asymptotic formula
\[
 I_\alpha(x)=\frac{\exp(x)}{\sqrt{2\pi x}}\{1+O(x^{-1})\},\qquad x\to\infty,
\]
gives
\[
 t^{q-\mu}I_\mu(t)=\frac{\exp(t) t^s}{\sqrt{2\pi}}\{1+O(t^{-1})\}.
\]
Endpoint integration by parts yields
\[
 \int_0^x \exp\!\left(-\gamma t\right)t^{q-\mu}I_\mu(t)\,\dd t
 \sim \frac{\exp\!\left(\lambda x\right)x^s}{\sqrt{2\pi}\lambda},
\]
while the denominator in \eqref{Rdef} is asymptotic to \(\exp\!\left(\lambda x\right)x^s/\sqrt{2\pi}\). This proves \eqref{RlimitInf}.
\end{proof}

\begin{proposition}\label{prop:expansions}
Let \(\mu>-1\), \(q>-1\) and \(0<\gamma<1\). As \(x\downarrow0\),
\begin{align}\label{SmallExpansion}
 R_{\mu,q,\gamma}(x)
 ={}&\frac{2(\mu+1)}{q+1}
 \Bigg[1+\frac{\gamma x}{q+2} \\
 &+\left\{
 \frac{\gamma^2}{(q+2)(q+3)}
 +\frac{q+1}{4(\mu+1)(q+3)}
 -\frac1{4(\mu+2)}
 \right\}x^2+O(x^3)\Bigg].
\end{align}
As \(x\to\infty\),
\begin{equation}\label{LargeExpansion}
 R_{\mu,q,\gamma}(x)=
 \frac1{1-\gamma}+
 \frac{(\mu+1/2)(2-\gamma)-q}{(1-\gamma)^2x}
 +O(x^{-2}).
\end{equation}
\end{proposition}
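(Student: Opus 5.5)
We prove the two expansions separately: the small-\(x\) one by direct power-series manipulation, the large-\(x\) one by a first-order differential equation satisfied by the numerator.

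\emph{Small \(x\).} Write \(t^{q-\mu}I_\mu(t)=c_0t^q(1+t^2/(4(\mu+1))+O(t^4))\) with \(c_0=1/(2^\mu\Gamma(\mu+1))\). Multiply by \(e^{-\gamma t}=1-\gamma t+\gamma^2t^2/2+O(t^3)\) and integrate termwise. This gives the numerator as
\[
c_0x^{q+1}\Big[\tfrac1{q+1}-\tfrac{\gamma x}{q+2}+\big(\tfrac{\gamma^2}{2(q+3)}+\tfrac{1}{4(\mu+1)(q+3)}\big)x^2+O(x^3)\Big].
\]
For the denominator, write \(x^{q-\mu}I_{\mu+1}(x)=\tfrac{c_0}{2(\mu+1)}x^{q+1}(1+x^2/(4(\mu+2))+O(x^4))\) and multiply by \(e^{-\gamma x}\). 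Then \(e^{\gamma x}=1+\gamma x+\gamma^2x^2/2+\cdots\) and \((1+x^2/(4(\mu+2)))^{-1}=1-x^2/(4(\mu+2))+\cdots\), so the quotient is
\[
2(\mu+1)\,[\text{bracket}]\,(1+\gamma x+\gamma^2x^2/2)\Big(1-\tfrac{x^2}{4(\mu+2)}\Big).
\]
Factor out \(1/(q+1)\) and collect terms. The \(x\)-coefficient is \(\gamma-\gamma(q+1)/(q+2)=\gamma/(q+2)\). The \(x^2\)-coefficient collects \(\gamma^2[\tfrac12-\tfrac{q+1}{q+2}+\tfrac{q+1}{2(q+3)}]=\gamma^2/((q+2)(q+3))\), together with the Bessel contributions \((q+1)/(4(\mu+1)(q+3))-1/(4(\mu+2))\). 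This is routine bookkeeping.

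\emph{Large \(x\).} Let \(N(x)\) be the numerator and \(D(x)=e^{-\gamma x}Z_{\mu,q}(x)\), so that \(R=N/D\). Using \(N'=e^{-\gamma x}Y_{\mu,q}\) and \eqref{LogDerivatives}, we get \(D'/D=-\gamma+r_{\mu+1}+(q+1)/x\). Hence
\begin{equation*}
R'=\frac{1}{r_\mu}-R\Big(r_{\mu+1}-\gamma+\frac{q+1}{x}\Big).
\end{equation*}
From the Riccati equation \eqref{Riccati}, or the standard Hankel expansion, \(r_\alpha(x)=1-(2\alpha+1)/(2x)+O(x^{-2})\). Therefore
\[
1/r_\mu=1+(\mu+\tfrac12)/x+O(x^{-2}),\qquad r_{\mu+1}-\gamma+(q+1)/x=(1-\gamma)+(q-\mu-\tfrac12)/x+O(x^{-2}).
\]
Now substitute the ansatz \(R=\tfrac1{1-\gamma}+c/x+O(x^{-2})\) with \(R'=O(x^{-2})\). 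Matching the \(1/x\) terms gives
\[
(\mu+\tfrac12)-c(1-\gamma)-\frac{q-\mu-\tfrac12}{1-\gamma}=0,
\]
so \(c=\frac{(\mu+1/2)(2-\gamma)-q}{(1-\gamma)^2}\), as claimed.

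\emph{Main obstacle.} The matching step presupposes that \(R\) admits such an expansion with \(R'=O(x^{-2})\). This must be justified rather than assumed. One approach avoids the ODE altogether: expand \(N\) directly by repeated integration by parts using \(t^{q-\mu}I_\mu(t)=\tfrac{e^t t^s}{\sqrt{2\pi}}(1-\tfrac{4\mu^2-1}{8t}+O(t^{-2}))\). This gives \(N=\tfrac{e^{\lambda x}x^s}{\sqrt{2\pi}\lambda}\big(1+\tfrac{a_1}{x}+O(x^{-2})\big)\), with \(a_1\) coming from the two \(1/x\) sources (\(-s/\lambda\) from integration by parts and the Hankel correction). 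Dividing by the denominator's analogous expansion then recovers \(c\) and cross-checks the ODE computation.

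Alternatively, one can keep the ODE and argue rigorously. Let \(\lambda=1-\gamma\) and \(E=R-\tfrac1\lambda-\tfrac cx\). Then \(E\) satisfies \(E'=-(\lambda+O(1/x))E+O(x^{-2})\), and variation of constants forces \(E=O(x^{-2})\). This second route is the one I would write up, since Proposition~\ref{prop:limits} already gives \(R\to1/\lambda\).
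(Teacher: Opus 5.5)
Your proposal is correct. For small $x$ it is the paper's computation: the paper substitutes $t=xu$ and expands $\exp\{\gamma(x-t)\}$ in one step, while you expand $e^{-\gamma t}$ and $e^{\gamma x}$ separately. Your $\gamma^2$ bookkeeping $\tfrac12-\tfrac{q+1}{q+2}+\tfrac{q+1}{2(q+3)}=\tfrac{1}{(q+2)(q+3)}$ is right.

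For large $x$, the route you say you would write up is genuinely different from the paper's. The paper integrates the Hankel expansion $t^{q-\mu}I_\mu(t)=(2\pi)^{-1/2}e^{t}t^{s}\{1+c_\mu/t+O(t^{-2})\}$ by parts, which gives the numerator with relative correction $(c_\mu-s/\lambda)/x$. It then divides by the denominator's expansion and uses $c_\mu-c_{\mu+1}=\mu+\tfrac12$. That is exactly the alternative you sketch as a cross-check.

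Your main route instead uses the linear first-order equation $R'=1/r_\mu-R\{r_{\mu+1}-\gamma+(q+1)/x\}$, which the paper only derives later, in the proof of Proposition~\ref{prop:stationary}. It needs only two-term expansions of the ratios $r_\mu,r_{\mu+1}$, finds $c$ by formal matching, and justifies the matching through $E=R-1/\lambda-c/x$. That justification works as you describe:
\begin{itemize}
\item pick $x_0$ with $r_{\mu+1}-\gamma+(q+1)/x\ge\lambda/2$ on $[x_0,\infty)$;
\item the homogeneous part of $E$ is then $O(e^{-\lambda x/2})$;
\item the Duhamel term $\int_{x_0}^x e^{-\lambda(x-t)/2}O(t^{-2})\,\mathrm{d}t$ is $O(x^{-2})$ after splitting at $t=x/2$.
\end{itemize}
This argument does not actually need Proposition~\ref{prop:limits}, since it reproves $R\to1/\lambda$ from any finite value $R(x_0)$.

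What your route buys is that it never integrates an asymptotic expansion over all of $(0,x)$; the paper's ``endpoint integration by parts'' tacitly absorbs the $O(t^{-2})$ remainder and the non-asymptotic region near $0$. It also iterates mechanically to higher-order coefficients. The paper's route is shorter once the Hankel expansion of $I_\mu$ is granted. Take $r_\alpha(x)=1-(2\alpha+1)/(2x)+O(x^{-2})$ from the Hankel expansion, as you allow. Extracting it from \eqref{Riccati} alone would need an extra a priori bound on $r_\alpha$ and a stability argument.
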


\begin{proof}
At zero,
\[
 t^{q-\mu}I_\mu(t)=\frac{t^q}{2^\mu\Gamma(\mu+1)}
 \left(1+\frac{t^2}{4(\mu+1)}+O(t^4)\right),
\]
and
\[
 x^{q-\mu}I_{\mu+1}(x)=\frac{x^{q+1}}{2^{\mu+1}\Gamma(\mu+2)}
 \left(1+\frac{x^2}{4(\mu+2)}+O(x^4)\right).
\]
Writing \(t=xu\) and expanding \(\exp\!\left(\gamma(x-t)\right)\) gives \eqref{SmallExpansion}.

At infinity, let \(\lambda=1-\gamma\), \(s=q-\mu-1/2\) and \(c_\alpha=-(4\alpha^2-1)/8\). Then
\[
 t^{q-\mu}I_\mu(t)=\frac{\exp(t) t^s}{\sqrt{2\pi}}
 \left(1+\frac{c_\mu}{t}+O(t^{-2})\right).
\]
An endpoint integration by parts gives
\[
 \int_0^x \exp\!\left(-\gamma t\right)t^{q-\mu}I_\mu(t)\,\dd t
 =\frac{\exp\!\left(\lambda x\right)x^s}{\sqrt{2\pi}\lambda}
 \left(1+\frac{c_\mu-s/\lambda}{x}+O(x^{-2})\right).
\]
The endpoint denominator is
\[
 \exp\!\left(-\gamma x\right)x^{q-\mu}I_{\mu+1}(x)
 =\frac{\exp\!\left(\lambda x\right)x^s}{\sqrt{2\pi}}
 \left(1+\frac{c_{\mu+1}}{x}+O(x^{-2})\right).
\]
Since \(c_\mu-c_{\mu+1}=\mu+1/2\), \eqref{LargeExpansion} follows.
\end{proof}

\begin{corollary}\label{cor:strict-small}
For \(\mu>-1\), \(q>-1\) and \(0<\gamma<1\),
\begin{equation}\label{StrictSmall}
 M^*_{\mu,q,\gamma}>\frac{2(\mu+1)}{q+1}.
\end{equation}
If
\begin{equation}\label{InfinityAboveCondition}
 q<(\mu+1/2)(2-\gamma),
\end{equation}
then also
\begin{equation}\label{StrictInfinity}
 M^*_{\mu,q,\gamma}>\frac1{1-\gamma}.
\end{equation}
\end{corollary}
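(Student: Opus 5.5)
Both inequalities follow from Proposition~\ref{prop:expansions}: the supremum \(M^*_{\mu,q,\gamma}\) is at least every value of \(R_{\mu,q,\gamma}\). It therefore suffices to exhibit points where \(R_{\mu,q,\gamma}\) strictly exceeds the relevant endpoint limit.

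For \eqref{StrictSmall}, I use the small-\(x\) expansion \eqref{SmallExpansion}. Its first-order coefficient relative to the leading term is \(\gamma/(q+2)\). This is strictly positive, because \(\gamma>0\) and \(q+2>1\). Hence
\[
 R_{\mu,q,\gamma}(x)=\frac{2(\mu+1)}{q+1}\Bigl(1+\frac{\gamma x}{q+2}+O(x^2)\Bigr).
\]
For all sufficiently small \(x>0\), the linear term dominates the \(O(x^2)\) remainder. So \(R_{\mu,q,\gamma}(x)>2(\mu+1)/(q+1)\) at some \(x_0>0\), and \(M^*_{\mu,q,\gamma}\ge R_{\mu,q,\gamma}(x_0)>2(\mu+1)/(q+1)\).

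For \eqref{StrictInfinity}, I use the large-\(x\) expansion \eqref{LargeExpansion}. The coefficient of \(1/x\) is \(\{(\mu+1/2)(2-\gamma)-q\}/(1-\gamma)^2\). Under \eqref{InfinityAboveCondition} this coefficient is strictly positive. Then
\[
 R_{\mu,q,\gamma}(x)-\frac1{1-\gamma}=\frac{c}{x}+O(x^{-2}),\qquad c>0,
\]
which is positive for all large \(x\). Taking such an \(x_1\) gives \(M^*_{\mu,q,\gamma}\ge R_{\mu,q,\gamma}(x_1)>1/(1-\gamma)\).

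Neither step has a real obstacle; the argument is a direct reading of the expansions, and the only point needing care is the signs. In the first case no extra hypothesis is needed, since \(\gamma>0\) always makes the linear term positive. In the second case the hypothesis \eqref{InfinityAboveCondition} is exactly the condition that the \(1/x\) correction is positive. The one assumption I rely on is that the \(O(\cdot)\) remainders in Proposition~\ref{prop:expansions} are genuine uniform bounds as \(x\downarrow0\) and \(x\to\infty\), which the expansions as stated provide.
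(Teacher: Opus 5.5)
Your proposal is correct and is essentially the paper's own argument. The paper also notes that the linear coefficient \(\gamma/(q+2)\) in \eqref{SmallExpansion} is positive, and that under \eqref{InfinityAboveCondition} the \(x^{-1}\) coefficient in \eqref{LargeExpansion} is positive, so \(R_{\mu,q,\gamma}\) strictly exceeds its endpoint limit for all sufficiently small (respectively large) \(x\), and hence \(M^*_{\mu,q,\gamma}\ge R_{\mu,q,\gamma}(x_0)\) gives the strict bounds.
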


\begin{proof}
The coefficient of \(x\) in \eqref{SmallExpansion} is positive. Hence \(R_{\mu,q,\gamma}(x)\) is larger than its zero-endpoint limit for all sufficiently small positive \(x\). If \eqref{InfinityAboveCondition} holds, the coefficient of \(x^{-1}\) in \eqref{LargeExpansion} is positive, so \(R_{\mu,q,\gamma}(x)\) is larger than its infinity-endpoint limit for all sufficiently large \(x\).
\end{proof}

\begin{proposition}\label{prop:stationary}
Let \(\mu>-1\), \(q>-1\) and \(0<\gamma<1\). If \(R'_{\mu,q,\gamma}(x_0)=0\), then
\begin{equation}\label{StationaryEq}
 R_{\mu,q,\gamma}(x_0)=
 \frac{1}{r_\mu(x_0)\{r_{\mu+1}(x_0)+(q+1)/x_0-\gamma\}}.
\end{equation}
Moreover,
\begin{equation}\label{SharpRepresentation}
 M^*_{\mu,q,\gamma}=\max\left\{
 \frac{2(\mu+1)}{q+1},
 \frac1{1-\gamma},
 \sup_{R'_{\mu,q,\gamma}(x)=0}
 \frac{1}{r_\mu(x)\{r_{\mu+1}(x)+(q+1)/x-\gamma\}}
 \right\},
\end{equation}
where the last entry is omitted if there are no stationary points.
\end{proposition}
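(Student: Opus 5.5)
We write \(R=N/D\), where \(N(x)=\int_0^x e^{-\gamma t}t^{q-\mu}I_\mu(t)\,\dd t\) and \(D(x)=e^{-\gamma x}Z_{\mu,q}(x)\), and we differentiate. For the numerator, \(N'(x)=e^{-\gamma x}Y_{\mu,q}(x)=e^{-\gamma x}x^{q-\mu}I_\mu(x)\). For the denominator we use \eqref{LogDerivatives}, which gives
\[
 \frac{D'(x)}{D(x)}=r_{\mu+1}(x)+\frac{q+1}{x}-\gamma .
\]
Then \(R'=(N'D-ND')/D^2\). So \(R'(x_0)=0\) is equivalent to \(N'(x_0)=R(x_0)D'(x_0)\), that is,
\[
 R(x_0)=\frac{N'(x_0)}{D(x_0)\,\{r_{\mu+1}(x_0)+(q+1)/x_0-\gamma\}}.
\]
Since \(N'(x_0)/D(x_0)=I_\mu(x_0)/I_{\mu+1}(x_0)=1/r_\mu(x_0)\), this is exactly \eqref{StationaryEq}.

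One point needs care: we must not divide by a vanishing factor \(r_{\mu+1}+(q+1)/x-\gamma\). This does not arise, because \(N'>0\) and \(R(x_0)>0\), so \(N'=RD'\) forces \(D'(x_0)>0\). Hence the factor is positive and the right side of \eqref{StationaryEq} is a well-defined positive number.

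For \eqref{SharpRepresentation} we use the endpoint behaviour from Proposition~\ref{prop:limits}. The function \(R\) is continuous and indeed smooth on \((0,\infty)\), with finite limits \(2(\mu+1)/(q+1)\) as \(x\downarrow0\) and \(1/(1-\gamma)\) as \(x\to\infty\). We extend \(R\) continuously to \([0,\infty]\). The supremum \(M^*\) is then attained on this compactified interval, either at one of the two endpoints or at an interior point.

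If the supremum is attained at an interior point \(x_0\), then \(x_0\) is a local maximum of the differentiable function \(R\), so \(R'(x_0)=0\). In that case \(M^*=R(x_0)\), and by the first part this value equals the stationary expression. Consequently \(M^*\) is at most the right side of \eqref{SharpRepresentation}.

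The reverse inequality needs three observations. Each endpoint limit is a limit of values of \(R\), so it is at most \(M^*\). Each stationary value equals \(R(x)\) at some point \(x\), so it is also at most \(M^*\). Together these show the right side of \eqref{SharpRepresentation} is at most \(M^*\), which gives equality. When there are no stationary points, \(R\) is monotone and the supremum is one of the two limits, which is consistent with omitting the last entry.

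The main subtlety is the attainment argument. The supremum is over a noncompact set, so we must use the finite limits to compactify; otherwise the supremum could fail to be attained. Here it is reached either at an endpoint or at an interior maximum. We also rely on the positivity remark above so that \eqref{StationaryEq} makes sense. Everything else is direct differentiation.
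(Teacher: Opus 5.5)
Your proof is correct and follows essentially the same route as the paper: you differentiate the quotient using the logarithmic derivatives to obtain the stationary relation, then use the finite endpoint limits to force any interior maximum to be a critical point. The paper runs this last step as a contradiction on a compact subinterval $[a,b]$ rather than through your continuous extension to $[0,\infty]$, and your observation that $N'=RD'$ forces $r_{\mu+1}(x_0)+(q+1)/x_0-\gamma>0$ is a small point the paper leaves implicit.
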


\begin{proof}
Let
\[
 J_{\mu,q,\gamma}(x)=\exp\!\left(\gamma x\right)
 \int_0^x \exp\!\left(-\gamma t\right)t^{q-\mu}I_\mu(t)\,\dd t.
\]
Then \(J'_{\mu,q,\gamma}=\gamma J_{\mu,q,\gamma}+Y_{\mu,q}\) and \(R_{\mu,q,\gamma}=J_{\mu,q,\gamma}/Z_{\mu,q}\). Using \eqref{LogDerivatives},
\[
 R'_{\mu,q,\gamma}(x)=
 \frac1{r_\mu(x)}+
 \left(\gamma-r_{\mu+1}(x)-\frac{q+1}{x}\right)R_{\mu,q,\gamma}(x).
\]
At a stationary point this gives \eqref{StationaryEq}. It remains to justify the representation of the supremum. By Proposition \ref{prop:limits}, the quotient extends continuously to the compactified half-line with endpoint values
\(2(\mu+1)/(q+1)\) and \(1/(1-\gamma)\). Let \(B\) denote the right-hand side of \eqref{SharpRepresentation}, with the stationary-point term omitted if there are no stationary points. The inequality \(M^*_{\mu,q,\gamma}\ge B\) is immediate from the two endpoint limits and from the definition of the supremum over stationary points. Conversely, suppose that \(R_{\mu,q,\gamma}(x_0)>B\) for some \(x_0>0\). The endpoint limits allow us to choose \(0<a<x_0<b<\infty\) such that
\(R_{\mu,q,\gamma}(a)<R_{\mu,q,\gamma}(x_0)\) and \(R_{\mu,q,\gamma}(b)<R_{\mu,q,\gamma}(x_0)\). Hence \(R_{\mu,q,\gamma}\) attains its maximum on \([a,b]\) at an interior point \(y\). Then \(R'_{\mu,q,\gamma}(y)=0\), and \eqref{StationaryEq} gives \(R_{\mu,q,\gamma}(y)\le B\), contradicting \(R_{\mu,q,\gamma}(y)\ge R_{\mu,q,\gamma}(x_0)>B\). Thus no such \(x_0\) exists, and \eqref{SharpRepresentation} follows.
\end{proof}

For \(q\ge0\) the constructive constant can be optimized by a single balancing equation.

\begin{definition}\label{def:optimized}
For \(q\ge0\), define
\begin{equation}\label{Mhat}
 \widehat M_{\mu,q,\gamma}=\inf_{\gamma<\theta<1}M_{\mu,q,\gamma}(\theta),
\end{equation}
where \(M_{\mu,q,\gamma}(\theta)\) is the closed constant \eqref{Mpositive}.
\end{definition}

\begin{theorem}\label{thm:optimization}
Let \(\mu>-1\), \(q\ge0\) and \(0<\gamma<1\). There is a unique \(\theta_*=\theta_*(\mu,q,\gamma)\in(\gamma,1)\) satisfying
\begin{equation}\label{BalanceEquation}
 K_{\mu,q}\exp\!\left(\frac{2\gamma(\mu+2)\theta_*}{1-\theta_*}\right)
 =
 \frac{\mu+1+\theta_*}{(\mu+2)\theta_*(\theta_*-\gamma)}.
\end{equation}
Moreover,
\begin{equation}\label{OptimizedValue}
 \widehat M_{\mu,q,\gamma}=M_{\mu,q,\gamma}(\theta_*)
\end{equation}
and
\begin{equation}\label{SharpSandwich}
 M^*_{\mu,q,\gamma}\le \widehat M_{\mu,q,\gamma}.
\end{equation}
\end{theorem}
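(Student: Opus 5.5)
Write \(F(\theta)=K_{\mu,q}\exp\{2\gamma(\mu+2)\theta/(1-\theta)\}\) and \(G(\theta)=(\mu+1+\theta)/\{(\mu+2)\theta(\theta-\gamma)\}\) on \((\gamma,1)\), so that \(M_{\mu,q,\gamma}(\theta)=\max\{F(\theta),G(\theta)\}\) by Corollary \ref{cor:closed-q-positive}. The plan is to show that \(F\) is strictly increasing and \(G\) strictly decreasing. Then \(F-G\) is strictly increasing, has exactly one zero \(\theta_*\), and the maximum of the two is minimized exactly at the crossing.

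\textbf{Step 1: monotonicity of \(F\) and \(G\).} Since \(\gamma>0\) and \(\theta/(1-\theta)\) is increasing, \(F\) is strictly increasing and continuous. For \(G\), write
\[
 \frac{\mu+1+\theta}{\theta}=1+\frac{\mu+1}{\theta}.
\]
This factor is positive and nonincreasing in \(\theta\), because \(\mu+1>0\). The factor \(1/(\theta-\gamma)\) is positive and strictly decreasing. Their product \(G\) is therefore strictly decreasing and continuous. This is the step needing the most care with signs, since \(\mu+1\) could be small.

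\textbf{Step 2: boundary behaviour and uniqueness.} As \(\theta\downarrow\gamma\), \(F(\theta)\to K_{\mu,q}\exp\{2\gamma(\mu+2)\gamma/(1-\gamma)\}<\infty\), while \(G(\theta)\to+\infty\). Hence \(F-G\to-\infty\). As \(\theta\uparrow1\), \(F\to+\infty\) because \(\mu+2>0\) and \(\gamma>0\), while \(G\to(\mu+2)/\{(\mu+2)(1-\gamma)\}=1/(1-\gamma)<\infty\). Hence \(F-G\to+\infty\). Since \(F-G\) is continuous and strictly increasing, the intermediate value theorem gives a unique \(\theta_*\in(\gamma,1)\) with \(F(\theta_*)=G(\theta_*)\). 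This is \eqref{BalanceEquation}.

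\textbf{Step 3: identifying the infimum.} For \(\theta\in(\gamma,\theta_*]\), \(G\) is decreasing, so \(\max\{F,G\}\ge G(\theta)\ge G(\theta_*)\). For \(\theta\in[\theta_*,1)\), \(F\) is increasing, so \(\max\{F,G\}\ge F(\theta)\ge F(\theta_*)\). In both cases the bound is the common value \(M_{\mu,q,\gamma}(\theta_*)\), and it is attained at \(\theta_*\), giving \eqref{OptimizedValue}.

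\textbf{Step 4: the sandwich.} Corollary \ref{cor:closed-q-positive} with \(w(x)=x^q\) gives \(R_{\mu,q,\gamma}(x)\le M_{\mu,q,\gamma}(\theta)\) for all \(x>0\) and every \(\theta\in(\gamma,1)\). Taking the supremum over \(x\) gives \(M^*_{\mu,q,\gamma}\le M_{\mu,q,\gamma}(\theta)\). Taking the infimum over \(\theta\) then gives \eqref{SharpSandwich}.
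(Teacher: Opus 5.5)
Your proposal is correct and follows essentially the same route as the paper. Both arguments split \(M_{\mu,q,\gamma}(\theta)\) into a strictly increasing exponential term and a strictly decreasing ratio term, and use the boundary behaviour at \(\theta=\gamma\) and \(\theta=1\) to obtain a unique crossing that minimizes the maximum. Both then get the sandwich from the main weighted bound with \(w(x)=x^q\). The only differences are cosmetic: you prove the second term is decreasing by factoring it into positive decreasing pieces rather than through the paper's logarithmic derivative \(C'/C<0\), and you write out the endpoint limits and the intermediate value step more explicitly.
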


\begin{proof}
Write \(M_{\mu,q,\gamma}(\theta)=\max\{A(\theta),C(\theta)\}\) with
\[
 A(\theta)=K_{\mu,q}\exp\!\left(\frac{2\gamma(\mu+2)\theta}{1-\theta}\right),
 \qquad
 C(\theta)=\frac{\mu+1+\theta}{(\mu+2)\theta(\theta-\gamma)}.
\]
Then \(A\) is strictly increasing on \((\gamma,1)\) and \(C\) is strictly decreasing there, since
\[
 \frac{C'(\theta)}{C(\theta)}=\frac1{\mu+1+\theta}-\frac1\theta-\frac1{\theta-\gamma}<0.
\]
Also \(C(\theta)\to\infty\) as \(\theta\downarrow\gamma\), whereas \(A\) is finite there; and \(A(\theta)\to\infty\) as \(\theta\uparrow1\), whereas \(C\) is finite there. Hence there is a unique crossing point, and the maximum is minimized precisely at that crossing point. The bound \eqref{SharpSandwich} follows from Theorem \ref{thm:main-weighted}.
\end{proof}

\section{Parameter comparison theorems}\label{sec:parameters}

The constants have useful monotonicity properties. These are important when a family of weights or orders must be handled by a single bound.

\begin{theorem}\label{thm:constructive-monotone}
Fix \(0<\gamma<\theta<1\). On the parameter range \(\mu>-1\), \(q\ge0\), the closed constructive constant \(M_{\mu,q,\gamma}(\theta)\) in \eqref{Mpositive} is
\begin{enumerate}
\item nondecreasing in \(\mu\);
\item nonincreasing in \(q\);
\item strictly increasing in \(\gamma\).
\end{enumerate}
Consequently, the optimized constant \(\widehat M_{\mu,q,\gamma}\) is nondecreasing in \(\mu\), nonincreasing in \(q\), and nondecreasing in \(\gamma\).
\end{theorem}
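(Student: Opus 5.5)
The closed constant \eqref{Mpositive} is the maximum of two explicit terms,
\[
 A(\mu,q,\gamma)=K_{\mu,q}\exp\!\left(\frac{2\gamma(\mu+2)\theta}{1-\theta}\right),
 \qquad
 C(\mu,\gamma)=\frac{\mu+1+\theta}{(\mu+2)\theta(\theta-\gamma)}.
\]
A maximum of two functions that are each monotone in the same direction is monotone in that direction, and is strictly monotone if both are strict. So the plan is to check the sign of the dependence of each term on each parameter separately, with \(\theta\) held fixed throughout.

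\emph{Dependence on \(\mu\).} Write \(K_{\mu,q}=\max\{1,2(\mu+1)/(q+1)\}\). This is a maximum of a constant and a function increasing in \(\mu\), so it is nondecreasing in \(\mu\). The exponential factor has exponent \(2\gamma\theta(\mu+2)/(1-\theta)\), which is strictly increasing in \(\mu\) because \(\gamma,\theta>0\). Hence \(A\) is nondecreasing, in fact strictly increasing, in \(\mu\).

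For \(C\) we need \((\mu+1+\theta)/(\mu+2)=1-(1-\theta)/(\mu+2)\). Since \(\theta<1\), this is strictly increasing in \(\mu>-1\), so \(C\) is increasing in \(\mu\). It follows that \(M\) is nondecreasing in \(\mu\).

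\emph{Dependence on \(q\).} Only \(K_{\mu,q}\) involves \(q\). For \(q\ge0>-1\) the quotient \(2(\mu+1)/(q+1)\) is positive and decreasing in \(q\), so \(K_{\mu,q}\) is nonincreasing in \(q\). Therefore \(A\) is nonincreasing in \(q\), while \(C\) is independent of \(q\). Hence \(M\) is nonincreasing in \(q\).

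\emph{Dependence on \(\gamma\).} Since \(\mu+2>0\) and \(\theta>0\), the exponent of \(A\) is strictly increasing in \(\gamma\), and \(K_{\mu,q}>0\); so \(A\) is strictly increasing. The term \(C\) is proportional to \(1/(\theta-\gamma)\), which is strictly increasing on \(\gamma<\theta\), and its prefactor \((\mu+1+\theta)/((\mu+2)\theta)\) is positive; so \(C\) is strictly increasing too. The maximum of two strictly increasing functions is strictly increasing, giving the strict claim.

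\emph{Consequence for \(\widehat M_{\mu,q,\gamma}\).} Recall \(\widehat M_{\mu,q,\gamma}=\inf_{\gamma<\theta<1}M_{\mu,q,\gamma}(\theta)\).

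For \(\mu\) and \(q\) the admissible set of \(\theta\) does not change. Pointwise monotonicity in \(\theta\) therefore passes directly to the infimum.

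For \(\gamma\) the admissible set \((\gamma,1)\) shrinks as \(\gamma\) increases, and this is the one step needing care. Take \(\gamma_1<\gamma_2\). Then
\[
 \widehat M_{\gamma_1}=\inf_{(\gamma_1,1)}M_{\gamma_1}
 \le\inf_{(\gamma_2,1)}M_{\gamma_1}
 \le\inf_{(\gamma_2,1)}M_{\gamma_2}=\widehat M_{\gamma_2}.
\]
The first inequality holds because the infimum is taken over a smaller set, and the second uses the pointwise monotonicity in \(\gamma\) on \((\gamma_2,1)\). This gives nondecreasing monotonicity of \(\widehat M\) in \(\gamma\). Strictness is not claimed because infima can lose it; one could recover it via Theorem \ref{thm:optimization}, but that is unnecessary for the statement.
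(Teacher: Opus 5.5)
Your proof is correct and follows essentially the same route as the paper: you check the monotonicity of each of the two terms in \eqref{Mpositive} separately in \(\mu\), \(q\) and \(\gamma\), use that a maximum preserves it, and pass to the infimum over \(\theta\). Your explicit inequality chain for \(\gamma\) spells out the paper's one-line remark that the admissible interval \((\gamma,1)\) shrinks as \(\gamma\) increases.
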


\begin{proof}
For \(q\ge0\),
\[
 K_{\mu,q}=\max\left\{1,\frac{2(\mu+1)}{q+1}\right\}
\]
is nondecreasing in \(\mu\) and nonincreasing in \(q\). The exponential factor in \eqref{Mpositive} is nondecreasing in \(\mu\) and strictly increasing in \(\gamma\). The second term in \eqref{Mpositive} equals
\[
 \frac{1}{\theta(\theta-\gamma)}\frac{\mu+1+\theta}{\mu+2},
\]
which is increasing in \(\mu\) because
\[
 \frac{\dd}{\dd\mu}\frac{\mu+1+\theta}{\mu+2}=\frac{1-\theta}{(\mu+2)^2}>0,
\]
and it is strictly increasing in \(\gamma\). It is independent of \(q\). Taking a maximum preserves these monotonicities. For the optimized constant, take infima over \(\theta\); monotonicity in \(\gamma\) also uses the fact that the admissible interval \((\gamma,1)\) shrinks as \(\gamma\) increases.
\end{proof}

\begin{theorem}\label{thm:sharp-monotone}
For fixed \(\mu>-1\), the sharp constant \(M^*_{\mu,q,\gamma}\) is nonincreasing in \(q>-1\) and nondecreasing in \(\gamma\in(0,1)\).
\end{theorem}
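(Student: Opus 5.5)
Both monotonicity claims will be proved pointwise: I will show that the quotient \(R_{\mu,q,\gamma}(x)\) of \eqref{Rdef} is monotone in each parameter for every fixed \(x>0\), and then take the supremum over \(x\) in \eqref{MstarDef}.

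\emph{Dependence on \(\gamma\).} Write
\[
 R_{\mu,q,\gamma}(x)=\frac{\int_0^x \exp\!\left(\gamma(x-t)\right)t^{q-\mu}I_\mu(t)\,\dd t}{x^{q-\mu}I_{\mu+1}(x)} .
\]
The denominator does not depend on \(\gamma\). In the numerator, for each \(0<t<x\) the factor \(\exp\{\gamma(x-t)\}\) is nondecreasing in \(\gamma\), because \(x-t\ge0\). The integrand is positive, so the numerator is nondecreasing in \(\gamma\). Hence \(R_{\mu,q,\gamma}(x)\) is nondecreasing in \(\gamma\) for each \(x\), and so is its supremum \(M^*_{\mu,q,\gamma}\).

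\emph{Dependence on \(q\).} Let \(-1<q_1<q_2\) and fix \(x>0\). The power \(t^q\) cancels at the endpoint, so
\[
 R_{\mu,q,\gamma}(x)=\frac{\int_0^x \exp\!\left(\gamma(x-t)\right)(t/x)^{q}\,t^{-\mu}I_\mu(t)\,\dd t}{x^{-\mu}I_{\mu+1}(x)} .
\]
On \((0,x]\) we have \(0<t/x\le1\), hence \((t/x)^{q_2}\le(t/x)^{q_1}\). The remaining factors are positive and do not involve \(q\). It follows that \(R_{\mu,q_2,\gamma}(x)\le R_{\mu,q_1,\gamma}(x)\) for every \(x\), and taking suprema gives \(M^*_{\mu,q_2,\gamma}\le M^*_{\mu,q_1,\gamma}\).

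This is the same mechanism as the upper \(q\)-power weight comparison \eqref{UpperPowerRatio}. Indeed, \(w(t)=t^{q_2}\) satisfies \((t/x)^{q_2}\le (t/x)^{q_1}\), so it is an upper \(q_1\)-power weight, and the display above is exactly \eqref{UpperPowerRatio} applied to it. I would add a remark saying so.

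I do not expect a genuine obstacle in either step. The only point to check is that all suprema are finite, so that the comparisons are between real numbers. This holds by Theorem \ref{thm:main-weighted}, which gives \(M^*_{\mu,q,\gamma}<\infty\) for every \(\mu>-1\), \(q>-1\), \(0<\gamma<1\). I would also note that strictness can fail at the level of suprema. For example, if the supremum is the endpoint limit \(1/(1-\gamma)\), that limit does not depend on \(q\), so \(M^*\) can be constant in \(q\) on some ranges. This is why the theorem states only non-strict monotonicity.
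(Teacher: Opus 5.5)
Your proposal is correct and is essentially the paper's own proof. The paper also rewrites \(R_{\mu,q,\gamma}(x)\) as an integral of \(\exp\{\gamma(x-t)\}(t/x)^q\,t^{-\mu}I_\mu(t)\) divided by the fixed endpoint factor \(x^{-\mu}I_{\mu+1}(x)\), compares \((t/x)^{q_2}\le(t/x)^{q_1}\) and \(\exp\{\gamma_2(x-t)\}\ge\exp\{\gamma_1(x-t)\}\) pointwise for \(0<t\le x\), and then takes suprema over \(x\).
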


\begin{proof}
Rewrite \eqref{Rdef} as
\begin{equation}\label{RMonotoneForm}
 R_{\mu,q,\gamma}(x)=
 \int_0^x \exp\!\left(\gamma(x-t)\right)\left(\frac tx\right)^q
 \frac{t^{-\mu}I_\mu(t)}{x^{-\mu}I_{\mu+1}(x)}\,\dd t.
\end{equation}
If \(q_2>q_1\), then \((t/x)^{q_2}\le(t/x)^{q_1}\) for \(0<t\le x\), so
\(R_{\mu,q_2,\gamma}(x)\le R_{\mu,q_1,\gamma}(x)\) for every \(x\). Taking suprema gives monotonicity in \(q\). If \(\gamma_2>\gamma_1\), then \(\exp\!\left(\gamma_2(x-t)\right)\ge\exp\!\left(\gamma_1(x-t)\right)\), so the quotient is pointwise larger and the sharp constant is nondecreasing in \(\gamma\).
\end{proof}

\begin{corollary}\label{cor:boxes}
Let \(\mu\in[\mu_0,\mu_1]\subset(-1,\infty)\), \(q\in[q_0,q_1]\subset[0,\infty)\) and \(\gamma\in(0,\gamma_0]\) with \(\gamma_0<1\). If \(\gamma_0<\theta<1\), then
\begin{equation}\label{BoxConstant}
 M_{\mu,q,\gamma}(\theta)
 \le M_{\mu_1,q_0,\gamma_0}(\theta).
\end{equation}
Thus a single explicit endpoint constant controls the whole parameter box.
\end{corollary}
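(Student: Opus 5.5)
The bound \eqref{BoxConstant} follows from the three monotonicity statements of Theorem \ref{thm:constructive-monotone}, applied one parameter at a time with \(\theta\) held fixed. First we check that every constant in the chain is well defined. Since \(\gamma\le\gamma_0<\theta<1\), the fixed \(\theta\) lies in \((\gamma,1)\) for every \(\gamma\) in the box, and also in \((\gamma_0,1)\). The ranges \(\mu_0,\mu_1>-1\) and \(q_0,q_1\ge0\) put us in the setting of the closed formula \eqref{Mpositive}, which is exactly the setting of Theorem \ref{thm:constructive-monotone}.

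The argument is then a three-step chain of inequalities. We do not re-derive the monotonicities, because each follows from Theorem \ref{thm:constructive-monotone} with the other parameters fixed.

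\emph{Step in \(\mu\).} Fix \(q\) and \(\gamma\). Since \(M\) is nondecreasing in \(\mu\), we get \(M_{\mu,q,\gamma}(\theta)\le M_{\mu_1,q,\gamma}(\theta)\).

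\emph{Step in \(q\).} Fix \(\mu=\mu_1\) and \(\gamma\). Since \(M\) is nonincreasing in \(q\) on \([0,\infty)\) and \(q\ge q_0\), we get \(M_{\mu_1,q,\gamma}(\theta)\le M_{\mu_1,q_0,\gamma}(\theta)\).

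\emph{Step in \(\gamma\).} Fix \((\mu_1,q_0)\). Since \(M\) is increasing in \(\gamma\) on \((0,\theta)\) and \(\gamma\le\gamma_0<\theta\), we get \(M_{\mu_1,q_0,\gamma}(\theta)\le M_{\mu_1,q_0,\gamma_0}(\theta)\).

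Chaining the three inequalities gives \eqref{BoxConstant}.

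The only point needing care is the \(\gamma\)-step. The monotonicity in \(\gamma\) must be applied on an interval where the fixed \(\theta\) stays admissible, and the hypothesis \(\gamma_0<\theta\) is what guarantees this, since it keeps \(\theta-\gamma\ge\theta-\gamma_0>0\) throughout. It is also worth checking directly from \eqref{Mpositive} that no hidden dependence breaks the chain. The first entry \(K_{\mu,q}\exp\{2\gamma(\mu+2)\theta/(1-\theta)\}\) is monotone in each variable separately. The second entry \((\mu+1+\theta)/\{(\mu+2)\theta(\theta-\gamma)\}\) does not depend on \(q\) and is monotone in \(\mu\) and \(\gamma\). A maximum of functions that are each monotone in the same direction is monotone in that direction, so all three steps are legitimate.

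Combined with Theorem \ref{thm:main-weighted}, this yields one explicit constant valid for the whole parameter box.
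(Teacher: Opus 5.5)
Your proof is correct and follows the paper's own route: the paper gives no separate proof and treats the corollary as an immediate consequence of Theorem~\ref{thm:constructive-monotone}. As you do, one chains the monotonicities in \(\mu\) (up to \(\mu_1\)), in \(q\) (down to \(q_0\)) and in \(\gamma\) (up to \(\gamma_0\)) with \(\theta\) fixed, and the hypothesis \(\gamma\le\gamma_0<\theta\) keeps \(\theta\) admissible throughout.
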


\section{Positive power mixtures and monotone regular variation}\label{sec:weights}

The weighted theorem is stable under positive superposition. This permits polynomial, generalized-polynomial and power-mixture weights.

\begin{theorem}\label{thm:mixtures}
Let \(\mu>-1\), \(0<\gamma<1\), and let \(\sigma\) be a finite positive measure on a compact interval
\begin{equation}\label{Qcompact}
 Q=[q_-,q_+]\subset(-1,\infty).
\end{equation}
Define
\begin{equation}\label{PowerMixture}
 W(x)=\int_Q x^q\,\dd\sigma(q),
 \qquad x>0.
\end{equation}
For every \(\theta\in(\gamma,1)\),
\begin{equation}\label{MixtureBound}
 \int_0^x \exp\!\left(-\gamma t\right)W(t)t^{-\mu}I_\mu(t)\,\dd t
 \le M_Q(\theta)\exp\!\left(-\gamma x\right)W(x)x^{-\mu}I_{\mu+1}(x),
\end{equation}
where
\begin{equation}\label{MQdef}
 M_Q(\theta)=\sup_{q\in Q}M_{\mu,q,\gamma}(\theta)<\infty.
\end{equation}
If \(Q\subset[0,\infty)\), then by Theorem \ref{thm:constructive-monotone},
\begin{equation}\label{MQpositive}
 M_Q(\theta)=M_{\mu,q_-,\gamma}(\theta).
\end{equation}
\end{theorem}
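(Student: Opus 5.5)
The plan is superposition. For each fixed \(q\in Q\), the exact power weight \(w(x)=x^q\) is an upper \(q\)-power weight, so Theorem \ref{thm:main-weighted} applies. It gives
\[
 \int_0^x e^{-\gamma t}t^{q-\mu}I_\mu(t)\,\dd t\le M_{\mu,q,\gamma}(\theta)e^{-\gamma x}x^{q-\mu}I_{\mu+1}(x)
 \le M_Q(\theta)e^{-\gamma x}x^{q-\mu}I_{\mu+1}(x).
\]
I will integrate this against \(\dd\sigma(q)\) over \(Q\). The integrand \((t,q)\mapsto e^{-\gamma t}t^{q-\mu}I_\mu(t)\) is nonnegative and jointly measurable, in fact continuous on \((0,x]\times Q\). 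Tonelli's theorem therefore lets me exchange the \(t\)- and \(q\)-integrals, which turns the left side into \(\int_0^x e^{-\gamma t}W(t)t^{-\mu}I_\mu(t)\,\dd t\). Since the right side integrates to \(M_Q(\theta)e^{-\gamma x}W(x)x^{-\mu}I_{\mu+1}(x)\), this yields \eqref{MixtureBound}, provided \(M_Q(\theta)<\infty\).

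The main step is finiteness of \(M_Q(\theta)\). For this I will use the explicit form \eqref{Mgeneral}:
\[
M_{\mu,q,\gamma}(\theta)=\max\{K_{\mu,q}e^{\gamma X_{\mu,q}(\theta)},\,1/(\beta_{\mu,q}(\theta)(\theta-\gamma))\}.
\]

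First, \(K_{\mu,q}=\max\{1,2(\mu+1)/(q+1)\}\le\max\{1,2(\mu+1)/(q_-+1)\}\), which is finite because \(q_->-1\).

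Second, \(X_{\mu,q}(\theta)\) is bounded on \(Q\).
\begin{itemize}
\item For \(q\ge0\) it is the constant \(2(\mu+2)\theta/(1-\theta)\).
\item For \(q\in[q_-,0)\), the second entry in \eqref{Xdef} is a continuous function of \(q\) on the compact set \([q_-,0]\). The discriminant there is positive, since \(-4(1-\theta)q(2\mu+2)>0\) for \(q<0\), so the square root is well defined. Hence it is bounded.
\end{itemize}
Thus \(X_{\mu,q}(\theta)\le X^+<\infty\) uniformly in \(q\in Q\).

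Third, \(\beta=X/(X+2\mu+2)\) is increasing in \(X\), and \(X\ge2(\mu+2)\theta/(1-\theta)\) always. Therefore \(\beta_{\mu,q}(\theta)\ge(\mu+2)\theta/(\mu+1+\theta)>0\), and the second entry is bounded by \((\mu+1+\theta)/((\mu+2)\theta(\theta-\gamma))\).

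Together these show \(M_Q(\theta)<\infty\).

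For \eqref{MQpositive}, suppose \(Q\subset[0,\infty)\). Then every \(q\in Q\) falls in the closed-form regime \eqref{Mpositive}, and Theorem \ref{thm:constructive-monotone} says \(M_{\mu,q,\gamma}(\theta)\) is nonincreasing in \(q\). The supremum over \(Q\) is therefore attained at \(q=q_-\).

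A minor point is that \(W\) is finite and positive. Finiteness holds because \(\sigma\) is finite and \(x^q\le\max\{x^{q_-},x^{q_+}\}\) on \(Q\). For positivity I assume \(\sigma\neq0\); otherwise both sides vanish and the inequality is trivial.
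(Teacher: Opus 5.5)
Your proposal is correct and follows the same route as the paper: apply Theorem~\ref{thm:main-weighted} to each exact power $x^q$, integrate against $\sigma$ using Tonelli's theorem, and deduce \eqref{MQpositive} from the $q$-monotonicity in Theorem~\ref{thm:constructive-monotone}. The paper proves $M_Q(\theta)<\infty$ by simply asserting that $q\mapsto M_{\mu,q,\gamma}(\theta)$ is continuous on compact subintervals, whereas you give explicit uniform bounds on $K_{\mu,q}$, $X_{\mu,q}(\theta)$ and $\beta_{\mu,q}(\theta)$; you also bound by $M_Q(\theta)$ before integrating, which avoids needing measurability of $q\mapsto M_{\mu,q,\gamma}(\theta)$.
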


\begin{proof}
By Tonelli's theorem and Theorem \ref{thm:main-weighted} applied to each power,
\[
\begin{aligned}
 \int_0^x \exp\!\left(-\gamma t\right)W(t)t^{-\mu}I_\mu(t)\,\dd t
 &=\int_Q\left(\int_0^x \exp\!\left(-\gamma t\right)t^{q-\mu}I_\mu(t)\,\dd t\right)\dd\sigma(q)\\
 &\le \int_Q M_{\mu,q,\gamma}(\theta)
    \exp\!\left(-\gamma x\right)x^{q-\mu}I_{\mu+1}(x)\,\dd\sigma(q)\\
 &\le M_Q(\theta)\exp\!\left(-\gamma x\right)W(x)x^{-\mu}I_{\mu+1}(x).
\end{aligned}
\]
The function \(q\mapsto M_{\mu,q,\gamma}(\theta)\) is continuous on compact subintervals of \((-1,\infty)\), so \(M_Q(\theta)<\infty\). If \(Q\subset[0,\infty)\), monotonicity in \(q\) gives \eqref{MQpositive}.
\end{proof}

\begin{corollary}\label{cor:finitesums}
Let
\begin{equation}\label{FiniteSumWeight}
 W(x)=\sum_{j=1}^N c_jx^{q_j},
 \qquad c_j>0,\qquad q_j>-1.
\end{equation}
Then
\begin{equation}\label{FiniteSumBound}
 \int_0^x \exp\!\left(-\gamma t\right)W(t)t^{-\mu}I_\mu(t)\,\dd t
 \le \left(\max_{1\le j\le N}M_{\mu,q_j,\gamma}(\theta)\right)
 \exp\!\left(-\gamma x\right)W(x)x^{-\mu}I_{\mu+1}(x).
\end{equation}
\end{corollary}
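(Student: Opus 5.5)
This is the special case of Theorem~\ref{thm:mixtures} in which \(\sigma\) is a finite sum of point masses. Set \(\sigma=\sum_{j=1}^N c_j\delta_{q_j}\) and \(Q=[\min_j q_j,\max_j q_j]\subset(-1,\infty)\). Then \(W(x)=\int_Q x^q\,\dd\sigma(q)\) is exactly the weight \eqref{FiniteSumWeight}. One could quote \eqref{MixtureBound} directly, but that gives the constant \(\sup_{q\in Q}M_{\mu,q,\gamma}(\theta)\), a supremum over the whole interval \(Q\). The statement asks for the maximum over the finite set \(\{q_j\}\), which is smaller, so I will rerun the Tonelli step from the proof of Theorem~\ref{thm:mixtures} in its discrete form.

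Fix \(\theta\in(\gamma,1)\). For each \(j\), the power weight \(x^{q_j}\) is an upper \(q_j\)-power weight, since \(x^{-q_j}x^{q_j}\equiv1\) is nondecreasing. Theorem~\ref{thm:main-weighted} therefore gives
\[
 \int_0^x e^{-\gamma t}t^{q_j-\mu}I_\mu(t)\,\dd t\le M_{\mu,q_j,\gamma}(\theta)\,e^{-\gamma x}x^{q_j-\mu}I_{\mu+1}(x),\qquad x>0.
\]
Multiply each inequality by \(c_j>0\) and sum over \(j\). By finite linearity, the left-hand sides add up to \(\int_0^x e^{-\gamma t}W(t)t^{-\mu}I_\mu(t)\,\dd t\). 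On the right, bound each \(M_{\mu,q_j,\gamma}(\theta)\) by \(\max_{1\le j\le N}M_{\mu,q_j,\gamma}(\theta)\), which is finite because it is a maximum of finitely many finite numbers. The remaining factor \(\sum_j c_jx^{q_j-\mu}I_{\mu+1}(x)\) equals \(W(x)x^{-\mu}I_{\mu+1}(x)\), and this yields \eqref{FiniteSumBound}.

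There is no real obstacle. The only care needed is over the standing hypotheses, which are implicit in the corollary: \(\mu>-1\), \(0<\gamma<1\), \(\theta\in(\gamma,1)\), and \(q_j>-1\) for every \(j\). These are exactly the conditions under which Theorem~\ref{thm:main-weighted} applies to each power separately.
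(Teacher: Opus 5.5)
Your proof is correct and follows the paper's intended derivation. The corollary is the discrete case $\sigma=\sum_j c_j\delta_{q_j}$ of Theorem~\ref{thm:mixtures}, and rerunning its linearity step, as you do, gives the maximum over the atoms directly.

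One small aside on your motivation: that maximum is in fact equal to $\sup_{q\in[\min_j q_j,\max_j q_j]}M_{\mu,q,\gamma}(\theta)$, not smaller. Because $X_{\mu,q}(\theta)$ is nonincreasing in $q$, the first entry of \eqref{Mgeneral} is nonincreasing and the second is nondecreasing in $q$, so both quantities equal the larger of the first entry at $\min_j q_j$ and the second entry at $\max_j q_j$. Your direct argument simply makes this check unnecessary.
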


\begin{theorem}\label{thm:regularvariation}
Let \(q>-1\) and let \(L\) be positive, absolutely continuous and nondecreasing on \((0,\infty)\). Set
\begin{equation}\label{RegWeight}
 w(x)=x^qL(x).
\end{equation}
Then \(w\) is an upper \(q\)-power weight and hence satisfies \eqref{MainWeightedBound}. In particular, any positive nondecreasing slowly varying function \(L\) gives, through \(w(x)=x^qL(x)\), a regularly varying weight of index \(q\) for which the same endpoint constant \(M_{\mu,q,\gamma}(\theta)\) is valid.

More generally, if
\begin{equation}\label{RegDefect}
 \frac{L'(x)}{L(x)}\ge-\eta
 \quad\text{for a.e. }x>0,
 \qquad 0\le\eta<1-\gamma,
\end{equation}
then \(w\in\mathcal A_q(\eta)\) and the approximate endpoint bound \eqref{ApproxBound} holds.
\end{theorem}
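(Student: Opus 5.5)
The plan is to verify the defining conditions of the two weight classes directly from the factorization \(w(x)=x^qL(x)\) and then invoke the corresponding main theorems. For the first assertion, observe that \(v_q(x)=w(x)x^{-q}=L(x)\), which is nondecreasing by hypothesis. The function \(w\) is positive, finite and measurable, because \(L\) is positive and continuous. Hence \(w\) is an upper \(q\)-power weight in the sense of Definition~\ref{def:admissible}, and Theorem~\ref{thm:main-weighted} gives \eqref{MainWeightedBound} with the same constant \(M_{\mu,q,\gamma}(\theta)\). Absolute continuity of \(L\) is not even needed for this part.

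For the regular-variation remark, recall that a positive slowly varying \(L\) makes \(x^qL(x)\) regularly varying of index \(q\) by definition. When \(L\) is also nondecreasing, the previous paragraph applies verbatim. I will note that the constant depends only on \((\mu,q,\gamma,\theta)\) and not on \(L\). This is the content of ``the same endpoint constant''.

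For the general statement, I will show that \(w\in\mathcal A_q(\eta)\), i.e.\ that \eqref{ApproxClass} holds. Since \(L\) is absolutely continuous and positive, \(\log L\) is locally absolutely continuous on \((0,\infty)\), with derivative \(L'/L\) a.e. Integrating \eqref{RegDefect} over \([t,x]\) gives \(\log L(x)-\log L(t)\ge-\eta(x-t)\), that is,
\[
 \frac{w(t)}{t^q}=L(t)\le \exp\!\left(\eta(x-t)\right)L(x)=\exp\!\left(\eta(x-t)\right)\frac{w(x)}{x^q},
 \qquad 0<t\le x.
\]
This is exactly \eqref{ApproxClass}. Because \(\eta<1-\gamma\), the hypotheses of Theorem~\ref{thm:approx} are met, and \eqref{ApproxBound} follows for every \(\theta\in(\gamma+\eta,1)\). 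Alternatively, one can note that \(w'/w=q/x+L'/L\ge q/x-\eta\) and cite \eqref{ApproxDerivative}; the direct integration avoids having to check absolute continuity of \(w\) near \(0\).

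The only step needing care is the justification that \(\log L\) is absolutely continuous on compact subintervals. This holds because \(L\) is absolutely continuous and bounded below by a positive constant on each compact \([t,x]\subset(0,\infty)\), and \(\log\) is Lipschitz there. The positive lower bound comes from continuity and positivity of \(L\) on the compact set. Everything else is a direct substitution into previously established results.
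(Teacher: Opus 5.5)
Your proof is correct and follows the paper's argument. The first part is the observation \(w(x)x^{-q}=L(x)\), and the second verifies \(w\in\mathcal A_q(\eta)\) and invokes Theorem~\ref{thm:approx}; you integrate \(L'/L\ge-\eta\) directly to obtain \eqref{ApproxClass}, whereas the paper writes \(w'/w=q/x+L'/L\ge q/x-\eta\) and cites the derivative criterion \eqref{ApproxDerivative}, which amounts to the same integration.
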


\begin{proof}
If \(L\) is nondecreasing, then \(w(x)x^{-q}=L(x)\) is nondecreasing. This proves the first assertion. If \eqref{RegDefect} holds, then
\[
 \frac{w'(x)}{w(x)}=\frac qx+\frac{L'(x)}{L(x)}\ge\frac qx-\eta,
\]
so Definition \ref{def:approx} and Theorem \ref{thm:approx} apply.
\end{proof}

\begin{remark}\label{remark:regular}
The point of Theorem \ref{thm:regularvariation} is global rather than asymptotic. Regular variation at infinity alone does not control the lower part of the interval uniformly in \(x\). The monotonicity or exponential-defect assumption supplies exactly the missing global comparison.
\end{remark}

\section{Recovery and strengthening of Gaunt-type inequalities}\label{sec:gaunt}

We now isolate the consequences for the reciprocal-power integrals studied by Gaunt \cite{Gaunt2019}.  In particular, Corollary \ref{cor:openproblem} gives an explicit affirmative answer to Open Problem~2.10 of that paper.  Theorem \ref{thm:shifted} also removes the smallness restriction on \(\gamma\) in this family by using a single \(I_{\nu+n+1}\) endpoint with an explicit constant.

\begin{theorem}\label{thm:gaunt-family}
Let \(n>-1\), \(\nu+n>-1\), \(0<\gamma<1\), and \(\gamma<\theta<1\). Then
\begin{equation}\label{CompleteGammaGaunt}
 \int_0^x \exp\!\left(-\gamma t\right)\frac{I_{\nu+n}(t)}{t^\nu}\,\dd t
 \le M_{\nu+n,n,\gamma}(\theta)
 \exp\!\left(-\gamma x\right)\frac{I_{\nu+n+1}(x)}{x^\nu},
 \qquad x>0.
\end{equation}
If \(n\ge0\), the explicit closed constant is
\begin{equation}\label{CompleteGammaConstant}
 M_{\nu+n,n,\gamma}(\theta)=
 \max\left\{
 K_{\nu+n,n}\exp\!\left(\frac{2\gamma(\nu+n+2)\theta}{1-\theta}\right),
 \frac{\nu+n+1+\theta}{(\nu+n+2)\theta(\theta-\gamma)}
 \right\},
\end{equation}
with
\begin{equation}\label{CompleteGammaK}
 K_{\nu+n,n}=\max\left\{1,\frac{2(\nu+n+1)}{n+1}\right\}.
\end{equation}
For \(-1<n<0\), the same estimate holds with \(M_{\nu+n,n,\gamma}(\theta)\) defined by \eqref{Mgeneral} and the negative-\(q\) threshold in \eqref{Xdef}.
\end{theorem}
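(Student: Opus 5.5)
This is the case \(a=0\) of Theorem \ref{thm:shifted}, so I would specialize Theorem \ref{thm:main-weighted} directly. Set \(\mu=\nu+n\), \(q=n\), and take the exact power weight \(w(x)=x^{q}=x^{n}\). The hypotheses \(\nu+n>-1\) and \(n>-1\) are exactly \(\mu>-1\) and \(q>-1\). Since \(w(x)x^{-q}\equiv1\) is constant, \(w\) is an upper \(q\)-power weight in the sense of Definition \ref{def:admissible}.

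The integrand then becomes
\[
 w(t)t^{-\mu}I_\mu(t)=t^{n-\nu-n}I_{\nu+n}(t)=\frac{I_{\nu+n}(t)}{t^\nu},
\]
and the endpoint becomes
\[
 w(x)x^{-\mu}I_{\mu+1}(x)=\frac{I_{\nu+n+1}(x)}{x^\nu}.
\]
Theorem \ref{thm:main-weighted} therefore gives \eqref{CompleteGammaGaunt} for every \(\theta\in(\gamma,1)\), with the constant \(M_{\nu+n,n,\gamma}(\theta)\) from \eqref{Mgeneral}. This already covers the last sentence of the statement, the case \(-1<n<0\), where the threshold \(X_{\mu,q}\) is the negative-\(q\) branch of \eqref{Xdef}.

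For \(n\ge0\) we have \(q\ge0\), so Corollary \ref{cor:closed-q-positive} applies. Substituting \(\mu=\nu+n\) into \eqref{Mpositive} gives the exponent \(2\gamma(\nu+n+2)\theta/(1-\theta)\) and the tail term \((\nu+n+1+\theta)/\{(\nu+n+2)\theta(\theta-\gamma)\}\). Substituting \(\mu=\nu+n\), \(q=n\) into \eqref{Kdef} gives \eqref{CompleteGammaK}. Together these yield \eqref{CompleteGammaConstant}.

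There is no real obstacle here. The only point needing care is that the sign condition used to select the closed form is on \(q=n\), not on \(\nu\). For example, \(\nu\) may be negative, even below \(-1/2\), as long as \(\nu+n>-1\). The convergence at the origin is guaranteed by \(q=n>-1\), as noted in Definition \ref{def:admissible}.
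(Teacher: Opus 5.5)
Your proposal is correct and matches the paper's proof: the paper obtains this as the case \(a=0\) of Theorem~\ref{thm:shifted}. That theorem is Theorem~\ref{thm:main-weighted} with \(\mu=\nu+n\), \(q=n\) and \(w(x)=x^{q}\), and for \(n\ge0\) its closed constant comes from Corollary~\ref{cor:closed-q-positive}, exactly as you argue.
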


\begin{proof}
This is Theorem \ref{thm:shifted} with \(a=0\).
\end{proof}

\begin{corollary}\label{cor:openproblem}
Let \(\nu>-1/2\) and \(0<\gamma<1\). For every \(\theta\in(\gamma,1)\),
\begin{equation}\label{OpenProblemSolved}
 \int_0^x \exp\!\left(-\gamma t\right)\frac{I_\nu(t)}{t^\nu}\,\dd t
 <
 \max\left\{
 2(\nu+1)\exp\!\left(\frac{2\gamma(\nu+2)\theta}{1-\theta}\right),
 \frac{\nu+1+\theta}{(\nu+2)\theta(\theta-\gamma)}
 \right\}
 \exp\!\left(-\gamma x\right)\frac{I_{\nu+1}(x)}{x^\nu}
\end{equation}
for all \(x>0\). Thus \eqref{GauntOpen} holds throughout the full parameter range \(\nu>-1/2\), \(0<\gamma<1\).
\end{corollary}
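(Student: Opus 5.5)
The plan is to specialise Corollary~\ref{cor:closed-q-positive} (equivalently Theorem~\ref{thm:gaunt-family} with \(n=0\)) to \(\mu=\nu\), \(q=0\) and the constant weight \(w\equiv1\). This weight is an upper \(0\)-power weight, since \(x^{-0}w(x)=1\) is nondecreasing. Because \(\nu>-1/2\), we have \(2(\nu+1)>1\), so \(K_{\nu,0}=\max\{1,2(\nu+1)\}=2(\nu+1)\). Substituting into \eqref{Mpositive} gives exactly the constant displayed in \eqref{OpenProblemSolved}, but with a non-strict inequality.

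The only remaining work is to upgrade \(\le\) to \(<\), and I would get this by tracking where strict inequalities already occur in the proof of Theorem~\ref{thm:main-weighted}.

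For \(0<x\le X\), the first step bounds \(e^{\gamma(x-t)}\le e^{\gamma X}\). This is strict on a set of positive measure, namely \(t\in(0,x)\) with \(t>x-X\), which always includes \(t\) near \(x\) whenever \(X>0\). Here \(X=2(\nu+2)\theta/(1-\theta)>0\) since \(\theta>0\). The integrand is positive, so the resulting estimate is strict: \(J(x)<A\,x^{-\nu}I_{\nu+1}(x)\).

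For \(x>X\), the term \(J_0(x)\) inherits this strictness, because \(J(X)<A\,X^{-\nu}I_{\nu+1}(X)\). The term \(J_1\) uses \(r_\nu(x)\ge\beta\), and Lemma~\ref{lemma:ratio} actually gives \(r_\nu(x)>x/(x+2\nu+2)\ge\beta\), so that step is also strict when \(x>X\). Combining these, the convex-combination coefficient is strictly below \(\max\{A,C\}\).

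I expect no real obstacle. The only point needing care is that the strictness survives every step. Alternatively, one can argue directly: the bound \(R\le M\) holds with \(M\) the stated maximum, and equality at some point would contradict the strict estimates just described.
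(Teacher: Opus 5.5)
Your proposal is correct and follows the paper's proof. You specialize to $\mu=\nu$, $q=0$, $w\equiv1$ with $K_{\nu,0}=2(\nu+1)$, retrace the proof of Theorem~\ref{thm:main-weighted}, and make the tail term strict via $r_\nu(x)>x/(x+2\nu+2)\ge\beta_{\nu,0}(\theta)$ from Lemma~\ref{lemma:ratio}. This gives $J(x)<\{AE+C(1-E)\}x^{-\nu}I_{\nu+1}(x)\le\max\{A,C\}\,x^{-\nu}I_{\nu+1}(x)$ with $E=e^{-(\theta-\gamma)(x-X)}$. One wording point: the strictness lives in that first inequality, not in $AE+C(1-E)<\max\{A,C\}$, which fails when $A=C$.

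The one difference is on $0<x\le X$. The paper gets strictness from Lemma~\ref{lemma:power-unweighted}, since the coefficient ratios $2(k+\nu+1)/(2k+1)$ are not all equal when $\nu>-1/2$. You get it instead from $e^{\gamma(x-t)}<e^{\gamma X}$ for $t\in(0,x)$, using $\gamma>0$. Both are valid.
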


\begin{proof}
Take \(n=0\) in Theorem \ref{thm:gaunt-family}. Since \(\nu>-1/2\), \(K_{\nu,0}=2(\nu+1)\), which gives the displayed constant. We record the strictness because the open problem is stated with a strict inequality. With the notation of the proof of Theorem \ref{thm:main-weighted}, put
\[
 A=2(\nu+1)\exp\!\left(\frac{2\gamma(\nu+2)\theta}{1-\theta}\right),
 \qquad
 C=\frac{\nu+1+\theta}{(\nu+2)\theta(\theta-\gamma)},
 \qquad M=\max\{A,C\}.
\]
If \(0<x\le X_{\nu,0}(\theta)\), then the proof of Theorem \ref{thm:main-weighted} uses Lemma \ref{lemma:power-unweighted} with \(q=0\). Since \(\nu>-1/2\), the coefficient ratios in \eqref{CoefficientRatio} are not all equal and the upper estimate is strict for every \(x>0\); hence the desired bound is strict.

If \(x>X_{\nu,0}(\theta)\), write again \(J=J_0+J_1\) and set \(E=\exp\{-(\theta-\gamma)(x-X_{\nu,0}(\theta))\}\). The same argument gives
\[
 J_0(x)\le AE\,x^{-\nu}I_{\nu+1}(x),
\]
whereas the tail term satisfies the strict estimate
\[
 J_1(x)< C(1-E)x^{-\nu}I_{\nu+1}(x),
\]
because Lemma \ref{lemma:ratio} gives \(r_\nu(x)>\beta_{\nu,0}(\theta)\). Therefore
\[
 J(x)<\{AE+C(1-E)\}x^{-\nu}I_{\nu+1}(x)
 \le Mx^{-\nu}I_{\nu+1}(x).
\]
Multiplying by \(e^{-\gamma x}\) proves \eqref{OpenProblemSolved}.
\end{proof}

\begin{remark}\label{remark:comparison-gaunt}
The upper estimates in \cite{Gaunt2019} for the exponentially tilted integral of \(I_{\nu+n}(t)t^{-\nu}\) were valid under a restriction of the form \(0<\gamma<1/C_{\nu,n}\). The estimate \eqref{CompleteGammaGaunt} is valid for every \(0<\gamma<1\). It does not claim to dominate the two-term endpoint expression in \cite{Gaunt2019} term by term; rather, it supplies a different endpoint majorant whose constant is explicit and uniform in \(x\) over the whole natural tilt range.
\end{remark}

\begin{corollary}\label{cor:extra-moment-gaunt}
Let \(\nu+n>-1\), \(a+n>-1\), \(0<\gamma<1\), and \(\gamma<\theta<1\). Then
\begin{equation}\label{ExtraMomentGaunt}
 \int_0^x \exp\!\left(-\gamma t\right)t^a\frac{I_{\nu+n}(t)}{t^\nu}\,\dd t
 \le M_{\nu+n,a+n,\gamma}(\theta)
 \exp\!\left(-\gamma x\right)x^a\frac{I_{\nu+n+1}(x)}{x^\nu},
 \qquad x>0.
\end{equation}
This includes \eqref{CompleteGammaGaunt} as the special case \(a=0\) and gives a complete-gamma estimate for every additional moment power \(a\) satisfying \(a+n>-1\).
\end{corollary}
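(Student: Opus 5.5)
The plan is to obtain Corollary~\ref{cor:extra-moment-gaunt} as a direct specialization of Theorem~\ref{thm:shifted}, which is itself Theorem~\ref{thm:main-weighted} applied to an exact power weight. Set \(\mu=\nu+n\) and \(q=a+n\). The hypotheses \(\nu+n>-1\) and \(a+n>-1\) are then exactly \(\mu>-1\) and \(q>-1\), as required in Theorem~\ref{thm:main-weighted}.

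The weight \(w(x)=x^q\) is an upper \(q\)-power weight in the sense of Definition~\ref{def:admissible}, because \(w(x)x^{-q}\equiv1\) is nondecreasing. The integrand rewrites as
\[
 w(t)t^{-\mu}I_\mu(t)=t^{a+n-(\nu+n)}I_{\nu+n}(t)=t^a\frac{I_{\nu+n}(t)}{t^\nu},
\]
and the endpoint rewrites as
\[
 w(x)x^{-\mu}I_{\mu+1}(x)=x^a\frac{I_{\nu+n+1}(x)}{x^\nu}.
\]
Substituting these into \eqref{MainWeightedBound}, with the constant \(M_{\mu,q,\gamma}(\theta)=M_{\nu+n,a+n,\gamma}(\theta)\) from \eqref{Mgeneral}, gives \eqref{ExtraMomentGaunt} for every \(\theta\in(\gamma,1)\) and every \(x>0\).

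The stated inclusion follows by setting \(a=0\). Then \(q=n\), the range condition \(a+n>-1\) becomes \(n>-1\), and \eqref{ExtraMomentGaunt} reduces to \eqref{CompleteGammaGaunt} of Theorem~\ref{thm:gaunt-family}, with the same constant \(M_{\nu+n,n,\gamma}(\theta)\). When \(a+n\ge0\), the explicit closed form of the constant is \eqref{ShiftedClosedConstant}, by Corollary~\ref{cor:closed-q-positive}.

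There is no real obstacle beyond bookkeeping. The one thing to confirm is that \(n\) is allowed to be any real number rather than an integer. This holds because Theorem~\ref{thm:main-weighted} requires only \(\mu>-1\) and \(q>-1\), and these are precisely the two assumed inequalities.
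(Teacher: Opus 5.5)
Your proposal is correct and matches the paper's route: the corollary restates Theorem~\ref{thm:shifted}, and that theorem is proved by exactly your substitution \(\mu=\nu+n\), \(q=a+n\), \(w(x)=x^q\) into Theorem~\ref{thm:main-weighted}. You handle the reduction to \eqref{CompleteGammaGaunt} at \(a=0\) and the closed-form constant for \(a+n\ge0\) via Corollary~\ref{cor:closed-q-positive} just as the paper does.
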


\section{Conclusion}

The endpoint majorant in Gaunt's Open Problem 2.10 is the first member of a broad family of uniform weighted inequalities. The main result proves that if \(\mu>-1\), \(q>-1\), \(0<\gamma<1\), and \(w(x)x^{-q}\) is nondecreasing, then
\[
 \int_0^x \exp\!\left(-\gamma t\right)w(t)t^{-\mu}I_\mu(t)\,\dd t
 \le M_{\mu,q,\gamma}(\theta)
 \exp\!\left(-\gamma x\right)w(x)x^{-\mu}I_{\mu+1}(x),
 \qquad x>0,
\]
with \(M_{\mu,q,\gamma}(\theta)\) explicit. For \(q\ge0\) this constant is the closed expression \eqref{Mpositive}; for \(-1<q<0\) only the threshold must be enlarged as in \eqref{Xdef}.

The theory developed here adds seven pieces beyond the basic resolution of the open problem. First, the admissible-weight framework includes exact monotone power weights and approximate weights satisfying \(w'/w\ge q/x-\rho\) together with the averaged defect condition \(\int_t^x\rho(s)\,\dd s\le\eta(x-t)\), \(0\le\eta<1-\gamma\). Second, the shifted family covers \(I_{\nu+n}\) and classifies the endpoint choices \(I_{\nu+n+k}\). Third, lower endpoint bounds give lower estimates for weights with the reversed one-sided comparison, and two-sided estimates for exact power weights. Fourth, the sharp quotient is characterized by endpoint expansions and the stationary equation \eqref{StationaryEq}, with a one-parameter optimized constructive constant. Fifth, both constructive and sharp constants satisfy useful parameter comparison theorems. Sixth, positive power mixtures and monotone regularly varying amplitudes are covered without changing the endpoint scale. Seventh, the resulting estimates recover and strengthen the Gaunt-type upper bounds by covering the full natural range \(0<\gamma<1\).

The remaining problem is not existence of endpoint majorants, but sharp evaluation of \(M^*_{\mu,q,\gamma}\) beyond the stationary representation \eqref{SharpRepresentation}. That sharp-constant problem is separate from the uniform weighted upper-bound theory established here.

\acknowledgments
This research did not receive any specific grant from funding agencies in the public, commercial, or not-for-profit sectors.

\datastatement
No data were used in this study.

\section*{Declaration of Generative AI and AI-Assisted Technologies in the Writing Process}
During the preparation of this work, the authors used DeepSeek to build a specialized agent for solving mathematical problems, which was employed to generate an initial proof of the main theorem. After using this tool, the authors reviewed and edited the content as needed and take full responsibility for the content of the published article.

\paragraph*{CRediT authorship contribution statement.}
Yaoran Yang: Conceptualization, Methodology, Formal analysis, Writing - original draft. Yutong Zhang: Formal analysis, Validation, Writing - review and editing.
\paragraph*{Declaration of Generative AI and AI-Assisted Technologies in the Writing Process}
During the preparation of this work, the authors used DeepSeek for logical structuring and language polishing. After using this tool, the authors reviewed and edited the content as needed and take full responsibility for the content of the published article.
\bibliographystyle{ametsoc2014}
\bibliography{references}

\end{document}